\documentclass[reqno]{amsart}
\usepackage{hyperref}

\begin{document}
\title[\hfilneg\hfil fractional Schr\"odinger-Poisson systems]
{Existence and multiplicity results  for the fractional
Schr\"odinger-Poisson systems}

\author[ J. Zhang \hfil \hfilneg]
{Jinguo Zhang}

\address{Jinguo. Zhang \newline
School of Mathematics,
Jiangxi Normal University, Nanchang 330022, China}
\email{jgzhang@jxnu.edu.cn}

\subjclass[2000]{35J20, 47J30, 58E05}
\keywords{fractional Schr\"odinger-Poisson systems; fountain theorem;
\hfill\break\indent infinitely many solutions}

\begin{abstract}
 This paper is devoted to study the existence and multiplicity solutions for the nonlinear
 Schr\"odinger-Poisson systems involving fractional Laplacian operator:
\begin{equation}\label{eq*}
 \left\{ \aligned
 &(-\Delta)^{s} u+V(x)u+ \phi u=f(x,u), \quad &\text{in }\mathbb{R}^3,\\
 &(-\Delta)^{t} \phi=u^2, \quad &\text{in }\mathbb{R}^3,\\
  \endaligned
  \right.
\end{equation}
where $(-\Delta)^{\alpha}$ stands for the fractional Laplacian of order $\alpha\in (0\,,\,1)$.
Under certain assumptions on $V$ and $f$, we obtain infinitely many high energy solutions for \eqref{eq*} without
assuming the Ambrosetti-Rabinowitz condition by using the fountain theorem.
\end{abstract}

\maketitle
\numberwithin{equation}{section}
\newtheorem{theorem}{Theorem}[section]
\newtheorem{lemma}{Lemma}[section]
\newtheorem{proposition}{Proposition}[section]
\newtheorem{corollary}{Corollary}[section]
\newtheorem{definition}{Definition}[section]
\newtheorem{remark}{Remark}[section]
\allowdisplaybreaks

\section{Introduction and main results}

In this paper, we are concerned with the existence and multiplicity results for the following nonlinear
Schr\"odinger-Poisson systems involving fractional Laplacian:
\begin{equation}\label{eq1-1}
 \left\{ \aligned
 &(-\Delta)^{s} u+V(x)u+ \phi u=f(x,u), \quad &x\in \mathbb{R}^3,\\
 &(-\Delta)^{t} \phi=u^2, \quad &x\in \mathbb{R}^3,\\
  \endaligned\right.
\end{equation}
where $(-\Delta)^{\alpha}$ is the fractional Laplacian operator for $\alpha=s\,,\,t\in (0\,,\,1)$.
In \eqref{eq1-1}, the first equation is a nonlinear fractional Schr\"odinger equation in which
the potential $\phi$ satisfies a nonlinear fractional Poisson equation. For this reason, system \eqref{eq1-1}
is called a fractional Schr\"odinger-Poisson system, also known as the fractional Schr\"odinger-Maxwell system.

If $\phi=0$ for all $x\in \mathbb{R}^{3}$, system \eqref{eq1-1} reduces to the nonlinear fractional scalar field equation
$$
(-\Delta)^{s}u+V(x)u=f(x,u)\quad x\in \mathbb{R}^{3},
$$
which has been extensively investigated, see for example, \cite{chang2013,dpv2013,fqt2012,s2013,s2014,sz2014}
and references therein. This equation is not only a physically relevant generalization of the classical
NLS but also an important model in the study of fractional quantum mechanics.
In \cite{laskin2000,laskin2002}, Laskin introduced this equation by expanding the Feynman path integral from the
Brownian-like to the L\'evy-like quantum mechanical paths.

To the best of our knowledge, there are only a few article studying the existence and multiplicity of solutions for
nonlinear elliptic system \eqref{eq1-1} involving fractional Laplacian via the variational methods after
it was introduced in \cite{arg2014}.
In \cite{arg2014}, the author studies the following one dimensional system
\begin{equation}\label{eq1-2}
 \left\{ \aligned
 &-\Delta u+ \phi u=a|u|^{p-1}u,\quad &x\in \mathbb{R},\\
 &(-\Delta)^{t} \phi=u^2, \quad &x\in\mathbb{R},\\
  \endaligned\right.
\end{equation}
where $1<p<5$, $0<t<1$. In \eqref{eq1-2} the diffusion is fractional only in the Poisson equation.
Recently, in \cite{jz2015}, the author proved the existence of radial ground state solution of \eqref{eq1-1}
when $V(x)\equiv 0$ and nonlinearity $f(x,u)$ is subcritical or critical growth.
In this paper, we deal with the non-autonomous case when $V(x)$ is not a constant, and
use the Fountain Theorem to find infinitely many large energy solutions  to system \eqref{eq1-1}.
Our system is more general and contain this as a particular case.

Observe that, taking $s=t=1$, the system \eqref{eq1-1} reduces to the classical Schr\"odinger-Poisson system
\begin{equation*}\label{eq1-3}
 \left\{ \aligned
 &-\Delta u+V(x)u+ \phi u=f(x,u), \quad &x\in \mathbb{R}^3,\\
 &-\Delta \phi=u^2, \quad &x\in \mathbb{R}^3,\\
  \endaligned\right.
\end{equation*}
Several papers have dealt with this problem, see, e.g.,
\cite{aa2008,aa2008-1,cv2010,ct2009,lsw2010,r2006,wz2007,z2009,zz2008} and
references therein. In \cite{wz2007}, the authors dealt with the case when $f$ is asymptotically linear at infinity.
 In \cite{aa2008-1,zz2008}, the authors proved the existence of ground state solutions for the case when $f$ is
 superlinear at infinity. Moreover, infinitely many high energy solutions for the superlinear case
  were obtained in \cite{ct2009,lsw2010} via the fountain theorem. In \cite{ct2009}, the following
  Ambrosetti and Rabinowitz condition was assumed,
  \begin{itemize}
  \item [(AR)] There exist $\mu > 4$ and $L>0$ such that
  $$0<\mu F(x,u)\leq u f(x,u),\,\,\,\forall x\in \mathbb{R}^3 ,\,\,\,|u|>L,$$
\end{itemize}
where $F(x,u)=\int_{0}^{u}f(x,\eta )d\eta$. It is well-known that the condition (AR) is crucial in verifying the
boundedness of the $(PS)_{c}$, $c\in \mathbb{R}$, sequence of the corresponding functional.
Without condition (AR), this problem becomes more complicated. In \cite{ct2009},
by using the variant fountain theorem, the authors only considered the case,
where $f(x, u)$ is odd in $u$ and $F(x,u)\geq  0$ for all $x \in \mathbb{R}^3$, $u\in \mathbb{R}$.
The natural question is whether system \eqref{eq1-1} has infinitely many high energy solutions if $f$ is odd but does
not satisfy $F(x, u)\geq 0$. To answer these questions, we assume the following more natural conditions $(f_3)$ or
 $(f_4)$ and give a positive answer. So, we generalize the result in \cite{ct2009},
  and deal with the Schr\"odinger-Poisson with fractional Laplacian operator.
 Moreover. the other main difficulty is to drive the boundedness of the $(PS)_{c}$ sequence of the corresponding functional.
To overcome this difficulty, we will employ the condition $(f_3)$ (or $(f_4)$) to ensure the boundedness of the
$(C)_c$ (or $(PS)_{c}$) sequence. If $f(x,u)$ is odd in $ u$, we obtain infinitely many high energy
solutions by using the fountain theorem.

We introduce the following hypotheses on potential $V$ and the nonlinear term $f$:
\begin{itemize}
\item [(V)] $\inf_{x\in   {\mathbb{R}^3}} V(x) \geq V_0 > 0$, where $V_0$ is a constant.
 Moreover, for every $M>0$,
$\operatorname{meas}(\{x\in {\mathbb{R}}^3:V(x)\leq M\})< \infty$,
 where $\text{meas}(\cdot)$ denote the Lebesgue measure in $\mathbb{R}^3$.
 \item[$(f_1)$]there exists $a_{1}>0$ and $p\in (2\,,\,2^{*}_{s})$ such that
$$|f(x,u)|\leq a_{1}(1+|u|^{p-1}),\quad \forall (x,u)\in \mathbb{R}^{3}\times \mathbb{R},$$
where $2^*_{s}:=\frac{6}{3-2s}$ is the critical exponent in fractional Sobolev inequalities.
\item[($f_2$)] $\lim\limits_{|u|\to \infty}\frac{F(x,u)}{|u|^{4}}=+\infty$ uniformly for $x\in \mathbb{R}^{3}$.
\item[($f_3$)] there exists $L>0$ such that
$$uf(x,u)-4F(x,u)\geq 0,\quad \forall x\in \mathbb{R}^{3},\,\,\,|u|\geq L.$$
\item [($f_4$)] there exits a constant $\theta \geq 1$ such that
$$
\theta \mathcal{F}(x,u) \geq \mathcal{F}(x,\tau u),\quad \forall (x,u)\in \mathbb{R}^3\times \mathbb{R}, \forall \tau \in [0,1],
$$
 where $\mathcal{F}(x,u) = u f(x,u) -4 F(x,u)$.
\item [($f_5$)] $f(x,-u)=-f(x,u)$,  $\forall (x,u)\in {\mathbb{R}^3}\times u\in \mathbb{R}$.
 \end{itemize}
Under the above hypotheses, our results can be stated as follows.

\begin{theorem}\label{th1-1}
Assume that  $(V)$, $(f_1)-(f_3)$ and $(f_5)$ hold. Then when $s\,,\,t\in (0,1)$ satisfying
 $4s+2t\geq 3$ problem \eqref{eq1-1} has infinitely many solutions
 $\{(u_{k}\,,\phi_{u_{k}}^{t})\}$in $H^s({\mathbb{R}}^3) \times D^{t,2}({\mathbb{R}}^3)$
satisfying $I(u_{k})\to +\infty$ as $k\to \infty$, where the functional $I$ is defined in \eqref{eq2-4*}.
\end{theorem}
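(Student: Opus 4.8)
The plan is to apply the Fountain Theorem to the reduced energy functional $I$ on the space $E := H^s(\mathbb{R}^3)$ equipped with the norm induced by $(V)$, after eliminating $\phi$ via the Poisson equation. First I would recall the reduction: for each $u \in E$, the second equation $(-\Delta)^t \phi = u^2$ has a unique solution $\phi_u^t \in D^{t,2}(\mathbb{R}^3)$ given by a Riesz-type convolution, and this map is well-defined precisely because $u^2 \in L^{6/(3+2t)}$ by the fractional Sobolev embedding; the constraint $4s + 2t \geq 3$ is what guarantees the relevant integrability and continuity of $u \mapsto \int \phi_u^t u^2$, making $I$ a well-defined $C^1$ functional whose critical points give solutions $(u, \phi_u^t)$. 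The oddness hypothesis $(f_5)$ ensures $I$ is even, which is the symmetry required by the Fountain Theorem.

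The core of the argument splits into verifying a compactness condition and two geometric conditions. For compactness I would check the Cerami condition $(C)_c$ (rather than $(PS)_c$, given the remark that $(f_3)$ controls $(C)_c$ sequences): the main step is to show any $(C)_c$ sequence $\{u_n\}$ is bounded, and here the absence of the Ambrosetti--Rabinowitz condition is the crux. The standard trick of testing with $u_n$ itself and combining $I(u_n)$ with $\langle I'(u_n), u_n\rangle$ produces the quantity $\int (u_n f(x,u_n) - 4F(x,u_n))\,dx = \int \mathcal{F}(x,u_n)\,dx$, which $(f_3)$ forces to be nonnegative up to a controlled error on $\{|u_n| < L\}$; assuming boundedness fails, I would normalize $v_n := u_n/\|u_n\|$, pass to a weak limit, and derive a contradiction by playing the superquadratic growth from $(f_2)$ (which forces $F/|u|^4 \to \infty$) against the coercive/quadratic structure of the $\phi$-term and the norm. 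Once boundedness is established, I would use the compact embedding $E \hookrightarrow L^q(\mathbb{R}^3)$ for $q \in [2, 2^*_s)$ guaranteed by condition $(V)$ to extract a strongly convergent subsequence, upgrading weak to strong convergence via the standard Brezis--Lieb / concentration-free argument that $(V)$ provides.

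For the geometry, with $\{e_k\}$ an orthonormal basis of $E$ and $Y_k := \operatorname{span}\{e_1,\dots,e_k\}$, $Z_k := \overline{\operatorname{span}\{e_j : j \geq k\}}$, I would verify the two Fountain conditions: on $Z_k$ with $\|u\| = r_k$ suitably large I expect $\inf I \to +\infty$, using $(f_1)$ to bound $F$ by $a_1(|u|^2 + |u|^p)$ together with the fact that the embedding constants $\beta_k := \sup_{u \in Z_k, \|u\|=1} \|u\|_{L^p}$ tend to $0$ as $k \to \infty$; the $\phi$-term is nonnegative and so only helps the lower bound. On the finite-dimensional $Y_k$ with $\|u\| = \rho_k > r_k$ I would show $\max I \to -\infty$, and this is where $(f_2)$ does the essential work: on a finite-dimensional space all norms are equivalent, so the superquartic growth $F(x,u) \gtrsim |u|^4$ dominates both the quadratic norm term and the quartic $\phi$-term (which grows like $\|u\|^4$), driving $I$ to $-\infty$ along rays.

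The step I expect to be the main obstacle is the boundedness of the Cerami sequence without (AR). The delicate point is that $I$ contains the nonlocal quartic term $\frac{1}{4}\int \phi_u^t u^2$, which scales like $\|u\|^4$ and competes directly with the superquartic nonlinearity, so the usual subquadratic-remainder estimates must be arranged to absorb this term; hypothesis $(f_3)$ (or the more flexible $(f_4)$ controlling $\mathcal{F}$ along the segment $[0,1]u$) is precisely calibrated to neutralize the $4F$ contribution against $uf$, and threading this estimate through the nonlocal term while invoking $(f_2)$ for the final contradiction is the technical heart of the proof.
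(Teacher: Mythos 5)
Your proposal follows essentially the same route as the paper: reduce to the single equation via the unique $\phi_u^t$, verify the Cerami condition by the dichotomy on the weak limit of $u_n/\|u_n\|_E$ (using $(f_2)$ with Fatou when the limit is nonzero and $(f_3)$ via $I(u_n)-\frac14\langle I'(u_n),u_n\rangle$ when it vanishes), and then check the two Fountain geometries exactly as in the paper, with $(f_2)$ supplying an arbitrarily large quartic coefficient $M_k$ to beat the nonlocal term $\frac{\widetilde{C}_2}{4}\|u\|_E^4$ on $Y_k$ and $\beta_k\to 0$ on $Z_k$. The argument is correct and matches the paper's proof in all essential steps.
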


\begin{theorem}\label{th1-2}
Assume that $(V)$, $(f_1)-(f_2)$ and  $(f_4)-(f_5)$ hold. Then when $s\,,\,t\in (0,1)$
satisfying $4s+2t\geq 3$ problem \eqref{eq1-1} has infinitely many solutions
$\{(u_{k}\,,\phi^{t}_{u_{k}})\}$ in $H^s({\mathbb{R}}^3) \times D^{t,2}({\mathbb{R}}^3)$
satisfying $I(u_{k})\to +\infty$ as $k\to \infty$.
\end{theorem}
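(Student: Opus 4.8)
The plan is to apply the Fountain Theorem to the reduced even functional $I$ of \eqref{eq2-4*} on the Hilbert space $E:=\{u\in H^s(\mathbb{R}^3):\int_{\mathbb{R}^3}V(x)u^2\,dx<\infty\}$, after eliminating $\phi$ via the second equation: for each $u\in E$ there is a unique $\phi_u^t\in D^{t,2}(\mathbb{R}^3)$ solving $(-\Delta)^t\phi=u^2$, and substituting it back produces $I$ with the $4$-homogeneous nonlocal term $\frac14\int_{\mathbb{R}^3}\phi_u^t u^2\,dx\ge 0$. The restriction $4s+2t\ge 3$ is precisely what guarantees $\frac{12}{3+2t}\le 2^*_s$, so that this term is finite on $E$. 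Hypothesis $(f_5)$ makes $I$ even, and condition $(V)$ yields the compact embeddings $E\hookrightarrow\hookrightarrow L^p(\mathbb{R}^3)$ for every $p\in[2,2^*_s)$, which I will use throughout. Here $\|\cdot\|$ denotes the norm of $E$.

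First I would verify the $(PS)_c$ condition for every $c\in\mathbb{R}$, which is the crux in the absence of (AR) and where $(f_4)$ enters. Taking $\tau=0$ in $(f_4)$ and using $\mathcal F(x,0)=0$ gives $\mathcal F(x,u)=uf(x,u)-4F(x,u)\ge 0$ for all $(x,u)$. Hence for a $(PS)_c$ sequence $\{u_n\}$ the identity $4I(u_n)-\langle I'(u_n),u_n\rangle=\|u_n\|^2+\int_{\mathbb{R}^3}\mathcal F(x,u_n)\,dx$ forces $\|u_n\|^2\le 4I(u_n)-\langle I'(u_n),u_n\rangle\le 4c+o(1)+\|I'(u_n)\|_{E^*}\|u_n\|$, and since $\|I'(u_n)\|_{E^*}\to 0$ this yields boundedness. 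Passing to a subsequence $u_n\rightharpoonup u$ in $E$ with $u_n\to u$ in $L^p$ for $p\in[2,2^*_s)$, I would upgrade to strong convergence: from $\langle I'(u_n)-I'(u),u_n-u\rangle\to 0$ one gets $\|u_n-u\|^2=o(1)+\int_{\mathbb{R}^3}[f(x,u_n)-f(x,u)](u_n-u)\,dx-D_n$, where $D_n$ is the difference of the nonlocal terms; the integral tends to $0$ by $(f_1)$ and the compact embeddings, while $D_n\ge 0$ by the monotonicity of the map $u\mapsto\phi_u^t u$, so $\|u_n-u\|\to 0$.

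It then remains to check the fountain geometry. Writing $E=\overline{\oplus_j X_j}$, $Y_k=\oplus_{j\le k}X_j$, $Z_k=\overline{\oplus_{j\ge k}X_j}$: on the finite-dimensional $Y_k$ all norms are equivalent, and $(f_2)$ (the super-$4$ growth of $F$) dominates both the quadratic term and the $4$-homogeneous nonlocal term, giving $I(u)\to-\infty$ as $\|u\|\to\infty$ in $Y_k$, hence the existence of $\rho_k$ with $\max_{u\in Y_k,\,\|u\|=\rho_k}I(u)\le 0$, which is condition (A1). For (A2) I would drop the nonnegative nonlocal term, estimate $\int F$ through $(f_1)$ and the embedding constants $\beta_k:=\sup_{u\in Z_k,\,\|u\|=1}\|u\|_{L^p}\to 0$, and choose a suitable $r_k\to\infty$ to obtain $\inf_{u\in Z_k,\,\|u\|=r_k}I(u)\to+\infty$. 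With $I$ even, $(PS)_c$ in hand, and (A1)--(A2) satisfied, the Fountain Theorem produces critical values $c_k\to+\infty$, i.e. solutions $u_k$ with $I(u_k)\to+\infty$, and $\phi_{u_k}^t$ closes the system.

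The main obstacle is the $(PS)_c$ verification. The boundedness step is exactly what $(f_4)$ is designed to deliver (through $\mathcal F\ge 0$), replacing the usual (AR) condition, while the strong-convergence step must control the nonlocal Poisson contribution; here the monotonicity of $u\mapsto\phi_u^t u$, rather than any compactness of the critical embedding $\frac{12}{3+2t}=2^*_s$ (which can occur in the borderline case $4s+2t=3$), is the decisive tool.
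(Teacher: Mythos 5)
Your proposal is correct, and the decisive step --- boundedness of the (PS)/Cerami sequences --- is handled by a genuinely different and in fact shorter argument than the paper's. The paper (Lemma \ref{lem3-2}) runs the full Jeanjean machinery: it normalizes $w_n=u_n/\|u_n\|_E$, splits into the cases $w\neq 0$ (where $(f_2)$ and Fatou's lemma give a contradiction) and $w=0$ (where maximizers $\eta_n$ of $\eta\mapsto I(\eta u_n)$ on $[0,1]$ are introduced and $(f_4)$ is used to compare $I(u_n)-\frac14\langle I'(u_n),u_n\rangle$ with $\frac{1}{\theta}\big(I(\eta_n u_n)-\frac14\langle I'(\eta_n u_n),\eta_n u_n\rangle\big)\to+\infty$). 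You instead observe that taking $\tau=0$ in $(f_4)$ gives $\theta\,\mathcal F(x,u)\geq\mathcal F(x,0)=0$, hence $\mathcal F\geq 0$ pointwise; since the quartic nonlocal term cancels in the identity $4I(u_n)-\langle I'(u_n),u_n\rangle=\|u_n\|_E^2+\int_{\mathbb{R}^3}\mathcal F(x,u_n)\,dx$ while the full $\|u_n\|_E^2$ survives, boundedness follows in one line, with no dichotomy and no use of $(f_2)$ at this stage. This shortcut is available here precisely because the balancing exponent is $4$ (dictated by the Poisson term) while the norm is quadratic; in Jeanjean's original setting the analogous identity leaves no norm term, which is why the elaborate trick is needed there but not here. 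A second genuine difference: for the passage from weak to strong convergence you exploit that $u\mapsto\phi_u^t u$ is monotone (the Riesz energy $u\mapsto\int\phi_u^t u^2$ is convex), so the nonlocal difference term $D_n$ has a sign and can simply be discarded; the paper instead argues that this term tends to zero via the embedding into $L^{12/(3+2t)}(\mathbb{R}^3)$, which is only continuous, not compact, in the borderline case $4s+2t=3$, so your route is actually more robust there. The fountain geometry (conditions (i) and (ii)) is verified the same way in both arguments, so your proof goes through and, where it differs, it simplifies or strengthens the paper's.
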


\begin{remark}\label{re1-1}\rm
From (AR) condition, for any $x\in \mathbb{R}^{3}$, $|u|\geq L$ and $\eta\in [\frac{L}{|u|}\,,\,1]$, we obtain
$$
\frac{d}{d\eta}\Big(\frac{F(x,\eta u)}{\eta^{\mu}}\Big)=
\frac{f(x,\eta u)\eta u-\mu F(x,\eta u)}{\eta^{\mu+1}}\geq 0,
$$
which implies that
\begin{equation}\label{eq1-4}
F(x,u)\geq \frac{|u|^{\mu}}{L}F(x,\frac{L u}{|u|})\geq \frac{|u|^{\mu}}{L^{\mu}}\inf\limits_{|u|=L}F(x,u),
\end{equation}
for all  $x\in \mathbb{R}^{3}$ and $|u|\geq L$. Since $\mu>4$ and $\inf\limits_{|u|=L}F(x,u)>0$ for all $x\in \mathbb{R}^{3}$,
the inequality \eqref{eq1-4} yields that
$$
\frac{F(x,u)}{|u|^{4}}\geq \frac{|u|^{\mu-4}}{L^{\mu}}\inf\limits_{|u|=L}F(x,u)\to +\infty\,\,,\text{as}\,\,\,|u|\to \infty
$$
and then
$$
uf(x,u)-4F(x,u)\geq (\mu-4)F(x,u)\geq 0
$$
for $|u|$ sufficiently large.
Therefore, (AR) implies $(f_2)$ and $(f_3)$.
\end{remark}

From Theorem \ref{th1-1} and Remark \ref{re1-1}, we get the following Corollary.
\begin{corollary}
The conclusion of Theorem \ref{th1-1} holds, if {\em (V), (AR)},  $(f_1)$ and $(f_5)$ hold.
\end{corollary}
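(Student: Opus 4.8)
The plan is to obtain the Corollary as an immediate consequence of Theorem~\ref{th1-1}, once I verify that the single hypothesis (AR) subsumes the two conditions $(f_2)$ and $(f_3)$ appearing there. Since (V), $(f_1)$ and $(f_5)$ are common to both the Corollary and Theorem~\ref{th1-1}, the entire task reduces to showing $(\mathrm{AR}) \Rightarrow (f_2)$ and $(\mathrm{AR}) \Rightarrow (f_3)$, which is precisely the content of Remark~\ref{re1-1}.

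First I would reproduce the monotonicity argument of Remark~\ref{re1-1}: for fixed $x \in \mathbb{R}^3$ and $|u| \geq L$, the map $\eta \mapsto \eta^{-\mu}F(x,\eta u)$ has derivative $\eta^{-\mu-1}\big(f(x,\eta u)\,\eta u - \mu F(x,\eta u)\big) \geq 0$ on $[L/|u|,1]$ by (AR), so it is nondecreasing there. Evaluating at the endpoints yields the lower bound \eqref{eq1-4}, namely $F(x,u) \geq L^{-\mu}|u|^{\mu}\inf_{|u|=L}F(x,u)$. From here the two implications are routine: dividing by $|u|^4$ and using $\mu > 4$ gives $F(x,u)/|u|^4 \to +\infty$ uniformly in $x$ as $|u| \to \infty$, which is $(f_2)$; and the left inequality in (AR) yields $uf(x,u) - 4F(x,u) \geq (\mu-4)F(x,u) \geq 0$ for $|u|$ large, which is $(f_3)$ with a (possibly enlarged) threshold. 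With $(f_2)$ and $(f_3)$ established, all hypotheses of Theorem~\ref{th1-1} hold, and I would conclude directly that \eqref{eq1-1} possesses infinitely many solutions $\{(u_k,\phi^t_{u_k})\}$ with $I(u_k)\to+\infty$.

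The only delicate point --- and thus the main obstacle, mild as it is --- lies in the positivity and $x$-uniformity of $\inf_{|u|=L}F(x,u)$, since $(f_2)$ demands that the divergence of $F(x,u)/|u|^4$ be uniform over all $x \in \mathbb{R}^3$. This relies on the strict, $x$-independent inequality $0 < \mu F(x,u)$ built into (AR) for $|u| > L$ (passing to the sphere $|u|=L$, or enlarging $L$ slightly), rather than on mere continuity; once this uniform positive lower bound is secured, the passage to the limit in $(f_2)$ is immediate and no genuine analytical difficulty remains.
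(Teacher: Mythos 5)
Your proposal is correct and follows exactly the paper's own route: Remark~\ref{re1-1} establishes $(\mathrm{AR})\Rightarrow (f_2)$ and $(f_3)$ via the monotonicity of $\eta\mapsto\eta^{-\mu}F(x,\eta u)$, and the Corollary is then an immediate application of Theorem~\ref{th1-1}. Your closing observation about needing a uniform (in $x$) positive lower bound for $\inf_{|u|=L}F(x,u)$ to get the uniformity required in $(f_2)$ is a legitimate subtlety that the paper silently glosses over, so flagging it is a point in your favor rather than a deviation.
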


\begin{remark}\label{re1-2}\rm
Condition $(f_4)$, which is weaker than the assumption that:
\begin{itemize}
\item[$(f'_4)$] $\frac{f(x,u)}{u^{3}}$ is increasing in $u>0$ and decreasing in $u<0$,
\end{itemize}
is originally due to Jeanjean \cite{lj1999} for semilinear problem in $\mathbb{R}^{N}$.
\end{remark}
From Theorem \ref{th1-2} and Remark \ref{re1-2}, we get the following Corollary.
\begin{corollary}
The conclusion of Theorem \ref{th1-2} holds, if $(V)$, $(f_1)-(f_2)$, $(f'_4)$ and $(f_5)$ hold.
\end{corollary}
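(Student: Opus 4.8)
The plan is to deduce this corollary directly from Theorem~\ref{th1-2} by showing that $(f'_4)$ implies $(f_4)$, in fact with the constant $\theta=1$. Since the remaining hypotheses $(V)$, $(f_1)$--$(f_2)$ and $(f_5)$ are assumed identically in both statements, once the implication $(f'_4)\Rightarrow(f_4)$ is established the conclusion follows verbatim from Theorem~\ref{th1-2}. This is exactly the content announced in Remark~\ref{re1-2}, and it is what I would verify first.

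Recall $\mathcal{F}(x,u)=uf(x,u)-4F(x,u)$, and fix $x\in\mathbb{R}^3$. The goal is to prove that for every $u\in\mathbb{R}$ the function $g(\tau):=\mathcal{F}(x,\tau u)$ is nondecreasing on $[0,1]$, since this yields $\mathcal{F}(x,u)=g(1)\geq g(\tau)=\mathcal{F}(x,\tau u)$, which is precisely $(f_4)$ with $\theta=1$. The case $u=0$ is trivial because $\mathcal{F}(x,0)=0$, so one may assume $u\neq 0$. The elementary consequence of $(f'_4)$ that I would record is a scaling inequality: for $u>0$ and $s\in[0,1]$ the increasing monotonicity gives $f(x,su)/(su)^3\leq f(x,u)/u^3$, hence $f(x,su)\leq s^3 f(x,u)$; for $u<0$ the monotonicity is reversed and multiplication by $(su)^3<0$ flips the inequality to $f(x,su)\geq s^3 f(x,u)$.

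Using the substitution $\eta=su$ to write $F(x,u)-F(x,\tau u)=u\int_\tau^1 f(x,su)\,ds$, I would then expand
\[
\mathcal{F}(x,u)-\mathcal{F}(x,\tau u)=uf(x,u)-\tau u f(x,\tau u)-4u\int_\tau^1 f(x,su)\,ds.
\]
For $u>0$ the bound $f(x,su)\leq s^3 f(x,u)$ gives $\int_\tau^1 f(x,su)\,ds\leq f(x,u)(1-\tau^4)/4$, and substituting collapses the right-hand side to $\tau u\,[\tau^3 f(x,u)-f(x,\tau u)]$, which is nonnegative because the bracket is nonnegative by the scaling inequality with $s=\tau$ and $\tau u>0$. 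For $u<0$ every inequality reverses while $\tau u\leq 0$, so the product is again nonnegative. In both cases $\mathcal{F}(x,u)\geq\mathcal{F}(x,\tau u)$, establishing $(f_4)$; then Theorem~\ref{th1-2} applies and produces the infinitely many high-energy solutions.

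I do not expect a genuine obstacle here, as the argument is a one-variable monotonicity computation. The only points requiring care are the correct bookkeeping of signs when $u<0$, where both the direction of the monotonicity in $(f'_4)$ and the sign of $(su)^3$ flip, and — if one instead prefers the differential form $g'(\tau)=u(\tau u)^4\frac{d}{dv}\big[f(x,v)/v^3\big]\big|_{v=\tau u}\geq 0$ — the mild regularity needed to differentiate. The integral estimate above avoids any smoothness assumption on $f$ beyond the continuity already implicit in $(f_1)$, so it is the version I would present, after which the corollary is immediate.
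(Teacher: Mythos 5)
Your proposal is correct and follows the same route as the paper: the corollary is obtained by combining Theorem \ref{th1-2} with the assertion of Remark \ref{re1-2} that $(f'_4)$ implies $(f_4)$. The only difference is that the paper leaves that implication unproved (citing Jeanjean), whereas you supply a correct verification of it with $\theta=1$, including the right sign bookkeeping for $u<0$.
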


\begin{remark}\label{re1-3} \rm
Obviously, under $(f_2)$ and $(f_3)$ or $(f_2)$ and $(f_4)$ , any $(PS)_{c}$ (or $(C)_{c}$)-sequence of the corresponding energy functional
is bounded, which plays an important role of the application of variational methods.
\end{remark}

The remainder of the paper is arranged as follows.
In Section 2, we present the variational setting for solving our problem.
In Section 3, we give the proofs of the above existence results.

\section{Variational settings and preliminaries}
In this section, we first recall the variational setting for system \eqref{eq1-1}.
A complete introduction to fractional Sobolev spaces can be found
in \cite{Nezza}, we offer below a short review.
We recall that the fractional Sobolev space $W^{\alpha,p}(\mathbb{R}^{N})$
is defined for any $p\in[1,+\infty)$ and $\alpha\in(0,1)$ as
$$
W^{\alpha,p}(\mathbb{R}^{N})
=\big\{u\in L^{p}(\mathbb{R}^{N}): \int_{\mathbb{R}^{N}}
\frac{|u(x)-u(y)|^{p}}{|x-y|^{N+\alpha p}}\,dx\,dy<\infty\big\}.
$$
This space is endowed with the Gagliardo norm
$$
\|u\|_{W^{\alpha,p}}=\Big(\int_{\mathbb{R}^{N}}|u|^{p}\,dx
+\int_{\mathbb{R}^{N}}\frac{|u(x)-u(y)|^{p}}{|x-y|^{N+\alpha p}}\,dx\,dy
\Big)^{\frac{1}{p}}.
$$

If $p=$2, the space $H^{\alpha,2}(\mathbb{R}^{N})$ is denoted by $H^{\alpha}(\mathbb{R}^{N})$,
an equivalent definition of fractional Sobolev spaces is based on Fourier analysis, that is,
$$
H^{\alpha}(\mathbb{R}^{N})=\big\{u\in L^2(\mathbb{R}^{N}):
 \int_{\mathbb{R}^{N}}(1+|\xi|^{2\alpha})|\hat{u}|^2d\xi<\infty\big\},
$$
and the norm can be equivalently written by
$$
\|u\|_{H^{\alpha}}=\Big(\int_{\mathbb{R}^{N}}|\xi|^{2\alpha}|\hat{u}|^2d\xi+
\int_{\mathbb{R}^{N}}|u|^2dx\Big)^{\frac{1}{2}},
$$
where $\hat{u}$ denote the usual Fourier transform of $u$.
Furthermore,  we know that $\|\cdot\|_{H^{\alpha}}$ is equivalent to the norm
$$
\|u\|_{H^{\alpha}}=\Big(\int_{\mathbb{R}^{N}}|(-\Delta)^{\frac{\alpha}{2}}u|^2dx
+\int_{\mathbb{R}^{N}}u^2\,dx\Big)^{\frac{1}{2}}.
$$

In this article, in view of the potential $V(x)$, we consider the subspace
$$
E_{V}=\big\{u\in H^{\alpha}(\mathbb{R}^{N}): \int_{\mathbb{R}^{N}} V(x)u^2\,dx
<\infty\big\}.
$$
Then, $E_{V}$ is a Hilbert space with the inner product
$$
(u,v)_{E_{V}}=\int_{\mathbb{R}^{N}}(|\xi|^{2\alpha}\hat{u}(\xi)\hat{v}(\xi)
+\hat{u}(\xi)\hat{v}(\xi))d\xi+\int_{\mathbb{R}^{N}}V(x)u(x)v(x)\,dx,
$$
 and the norm
$$
\|u\|_{E_{V}}=\Big(\int_{\mathbb{R}^{N}}(|\xi|^{2\alpha}|\hat{u}(\xi)|^2
+|\hat{u}(\xi)|^2)d\xi+ \int_{\mathbb{R}^{N}} V(x)u^2\,dx\Big)^{\frac{1}{2}}.
$$
Furthermore,  we know that $\|\cdot\|_{E_{V}}$ is equivalent to the norm
$$
\|u\|_{E}=\Big(\int_{\mathbb{R}^{N}}(|(-\Delta)^{\frac{\alpha}{2}}u|^2+V(x)u^2)\,dx
\Big)^{\frac{1}{2}},
$$
and the corresponding inner product is
$$
(u,v)_{E}=\int_{\mathbb{R}^{N}}\Big((-\Delta)
^{\frac{\alpha}{2}}u(-\Delta)^{\frac{\alpha}{2}}v+V(x)uv\Big)\,dx.
$$
Throughout out this paper, we  use the norm $\|\cdot\|_{E}$ in $E$.

The homogeneous Sobolev space $D^{\alpha,2}(\mathbb{R}^{3})$ is defined by
$$
D^{\alpha,2}(\mathbb{R}^{3})=\{u\in L^{2^{*}_{\alpha}}(\mathbb{R}^{3}):\, |\xi|^{\alpha}\hat{u}(\xi)\in L^{2}(\mathbb{R}^{3})\},
$$
which is the completion of $C_{0}^{\infty}(\mathbb{R}^{3})$ under the norm
$$
\|u\|_{D^{\alpha,2}}=\Big(\int_{\mathbb{R}^{N}}|(-\Delta)^{\frac{\alpha}{2}}u|^2\,dx\Big)^{\frac{1}{2}}
=\Big(\int_{\mathbb{R}^{3}}|\xi|^{2\alpha}|\hat{u}(\xi)|^{2}d\xi
\Big)^{\frac{1}{2}}.
$$
and the inner product
$$
(u,v)_{D^{\alpha,2}}=\int_{\mathbb{R}^{N}}(-\Delta)
^{\frac{\alpha}{2}}u (-\Delta)^{\frac{\alpha}{2}}v\,dx.
$$

As usual, for $1\leq p< +\infty$, we let
\begin{gather*}
\|u\|_{L^p}=\Big(\int_{\mathbb{R}^{N}}|u(x)|^{p}\,dx\Big)^{\frac{1}{p}},\quad
u\in L^{p}(\mathbb{R}^{N}), \\
\|u\|_{\infty}=\mbox{ess sup}_{x\in \mathbb{R}^{N}} |u(x)|, \quad
u\in L^{\infty}(\mathbb{R}^{N}).
\end{gather*}
To prove our results, the following compactness result is necessary.

\begin{lemma}\label{lem2-1}
$E$ is continuously embedded into $L^{p}(\mathbb{R}^{3})$ for
 $2\leq p \leq 2_{\alpha}^{\ast}:=\frac{6}{3-2\alpha}$ and compactly embedded into
$L^{p}(\mathbb{R}^{3})$ for $2\leq p <2_{\alpha}^{\ast}$.
\end{lemma}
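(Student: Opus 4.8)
The plan is to separate the statement into the continuous embedding, which follows quickly from the positivity of $V$ and the standard fractional Sobolev inequality, and the compact embedding for $2\le p<2^{*}_{\alpha}$, where the coercivity encoded in hypothesis $(V)$ does the real work. For the continuous part I would first observe that since $V(x)\ge V_{0}>0$ we have $\int_{\mathbb{R}^{3}}u^{2}\,dx\le V_{0}^{-1}\int_{\mathbb{R}^{3}}V(x)u^{2}\,dx$, so that $\|u\|_{H^{\alpha}}^{2}\le \max\{1,V_{0}^{-1}\}\,\|u\|_{E}^{2}$, i.e. $E\hookrightarrow H^{\alpha}(\mathbb{R}^{3})$ continuously. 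Composing this with the classical embedding $H^{\alpha}(\mathbb{R}^{3})\hookrightarrow L^{p}(\mathbb{R}^{3})$, valid for every $2\le p\le 2^{*}_{\alpha}=\frac{6}{3-2\alpha}$, produces a constant $C_{p}$ with $\|u\|_{L^{p}}\le C_{p}\|u\|_{E}$, which is exactly the continuous embedding.

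For the compact embedding I would take a sequence with $u_{n}\rightharpoonup 0$ weakly in $E$ (it suffices to treat the weak limit $0$, replacing $u_{n}$ by $u_{n}-u$ in general) and show $u_{n}\to 0$ in $L^{p}$ for $2\le p<2^{*}_{\alpha}$. First I would reduce to the case $p=2$ by interpolation: writing $\frac{1}{p}=\frac{1-\lambda}{2}+\frac{\lambda}{2^{*}_{\alpha}}$ with $\lambda\in[0,1)$ gives $\|u_{n}\|_{L^{p}}\le \|u_{n}\|_{L^{2}}^{1-\lambda}\,\|u_{n}\|_{L^{2^{*}_{\alpha}}}^{\lambda}$, and since $\|u_{n}\|_{L^{2^{*}_{\alpha}}}$ stays bounded by the continuous embedding, everything comes down to proving $\|u_{n}\|_{L^{2}}\to 0$. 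To establish this I would split $\mathbb{R}^{3}$ according to the size of the potential. On $\{V>M\}$ coercivity yields $\int_{\{V>M\}}u_{n}^{2}\,dx\le M^{-1}\int_{\mathbb{R}^{3}}V(x)u_{n}^{2}\,dx\le M^{-1}\|u_{n}\|_{E}^{2}\le C/M$, uniformly small for large $M$. On the finite-measure set $A_{M}:=\{V\le M\}$ I would insert a large ball $B_{R}$ and write $\int_{A_{M}}u_{n}^{2}=\int_{A_{M}\cap B_{R}}u_{n}^{2}+\int_{A_{M}\setminus B_{R}}u_{n}^{2}$, controlling the first term by the Rellich--Kondrachov-type local compactness $H^{\alpha}(B_{R})\hookrightarrow\hookrightarrow L^{2}(B_{R})$, so that it tends to $0$ as $n\to\infty$ for fixed $R$, and the second by Hölder's inequality with the uniform critical bound, $\int_{A_{M}\setminus B_{R}}u_{n}^{2}\le \|u_{n}\|_{L^{2^{*}_{\alpha}}}^{2}\,|A_{M}\setminus B_{R}|^{1-2/2^{*}_{\alpha}}$.

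The decisive point, and the one I expect to be the main obstacle, is the estimate over $A_{M}$: hypothesis $(V)$ guarantees only $|A_{M}|<\infty$, not boundedness, so local compactness by itself is insufficient, and one must absorb the portion of $A_{M}$ lying outside $B_{R}$ through the fact that $|A_{M}\setminus B_{R}|\to 0$ as $R\to\infty$ together with the critical-exponent bound. Taking limits in the order $n\to\infty$ (with $M,R$ fixed), then $R\to\infty$, then $M\to\infty$, a standard $\limsup$ argument gives $\limsup_{n}\|u_{n}\|_{L^{2}}^{2}\le C/M$ for every $M$, hence $\|u_{n}\|_{L^{2}}\to 0$. The interpolation step above then upgrades this to $\|u_{n}\|_{L^{p}}\to 0$ for every $p\in[2,2^{*}_{\alpha})$, which completes the proof of the compact embedding.
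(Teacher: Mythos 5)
Your argument is correct, but there is nothing in the paper to compare it against: the author states Lemma \ref{lem2-1} without any proof, treating it as a known fact (it is the fractional analogue of the Bartsch--Wang compactness lemma for coercive potentials). Your proposal supplies exactly the standard proof that is being implicitly invoked. The continuous embedding via $\int_{\mathbb{R}^3}u^2\,dx\le V_0^{-1}\int_{\mathbb{R}^3}V(x)u^2\,dx$ followed by $H^{\alpha}(\mathbb{R}^3)\hookrightarrow L^{p}(\mathbb{R}^3)$ is right, and so is the compactness argument: reduction to the weak limit $0$, interpolation down to $p=2$, the split $\{V>M\}\cup(A_M\cap B_R)\cup(A_M\setminus B_R)$, coercivity on the first piece, fractional Rellich--Kondrachov on the second (justified since $u_n\rightharpoonup 0$ in $E$ implies $u_n\rightharpoonup 0$ in $W^{\alpha,2}(B_R)$ and a ball is an extension domain), and H\"older with $|A_M\setminus B_R|\to 0$ on the third. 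You also correctly identify the decisive point, namely that $(V)$ gives only finiteness of $|A_M|$, so the tail of $A_M$ outside $B_R$ must be absorbed through the critical-exponent bound rather than through local compactness; the order of limits $n\to\infty$, then $R\to\infty$, then $M\to\infty$ is the right one. In short, your proof is complete and fills a gap the paper leaves open.
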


It follows directly from the Lemma \ref{lem2-1} that there are constants
 $C_{p}>0$ such that
$$
\|u\|_{L^p}\leq C_{p}\|u\|_{E}, \quad \forall u\in E, \; p\in [2,2_{\alpha}^{\ast}].
$$

\begin{lemma} \label{lem2-2} For any $\alpha\in (0\,,\,1)$,
$D^{\alpha,2}(\mathbb{R}^{3})$ is continuously embedded into $L^{2^*_{\alpha}}(\mathbb{R}^{3})$,
i.e.,
there exists $S_{\alpha}>0$ such that
$$
\Big(\int_{\mathbb{R}^{3}}|u|^{2^*_{\alpha}}dx\Big)^{\frac{2}{2^*_{\alpha}}}\leq S_{\alpha}
\int_{\mathbb{R}^{3}}|(-\Delta)^{\frac{\alpha}{2}}u|^{2}dx, \quad \forall u\in D^{\alpha,2}(\mathbb{R}^{3}).
$$
\end{lemma}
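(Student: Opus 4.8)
The plan is to deduce the inequality from the Hardy--Littlewood--Sobolev (HLS) inequality by inverting the fractional Laplacian through the Riesz potential. Since $D^{\alpha,2}(\mathbb{R}^{3})$ is by definition the completion of $C_{0}^{\infty}(\mathbb{R}^{3})$ under $\|\cdot\|_{D^{\alpha,2}}$, it suffices to establish the estimate for $u\in C_{0}^{\infty}(\mathbb{R}^{3})$ and then pass to the limit; the constant $S_{\alpha}$ will depend only on $\alpha$ (and the dimension $3$), not on $u$.

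First I would set $v:=(-\Delta)^{\alpha/2}u$, so that $\hat{v}(\xi)=|\xi|^{\alpha}\hat{u}(\xi)$ and, since $u\in C_{0}^{\infty}$, Plancherel gives $v\in L^{2}(\mathbb{R}^{3})$ with $\|v\|_{L^2}=\|u\|_{D^{\alpha,2}}$. On the Fourier side one has $\hat{u}(\xi)=|\xi|^{-\alpha}\hat{v}(\xi)$, which is exactly the symbol of the Riesz potential $I_{\alpha}$ of order $\alpha$; concretely,
\[
u(x)=(I_{\alpha}v)(x)=c_{3,\alpha}\int_{\mathbb{R}^{3}}\frac{v(y)}{|x-y|^{3-\alpha}}\,dy,
\]
for a dimensional constant $c_{3,\alpha}>0$. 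This representation is valid pointwise when $u\in C_{0}^{\infty}$, because then $|\xi|^{-\alpha}\hat{v}=\hat{u}$ is smooth and rapidly decreasing (note $|\xi|^{-\alpha}\hat v=\hat u$ already near $\xi=0$), so the kernel identity follows from the standard Fourier formula for $|\xi|^{-\alpha}$.

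Next I would apply HLS to the convolution with $|x|^{-(3-\alpha)}$. The HLS theorem asserts that $v\mapsto |x|^{-(3-\alpha)}*v$ maps $L^{p}(\mathbb{R}^{3})$ boundedly into $L^{q}(\mathbb{R}^{3})$ whenever $\tfrac1q=\tfrac1p-\tfrac{\alpha}{3}$ with $1<p<q<\infty$. Choosing $p=2$ (the natural choice, since $v\in L^{2}$) yields $\tfrac1q=\tfrac12-\tfrac{\alpha}{3}=\tfrac{3-2\alpha}{6}$, that is $q=\tfrac{6}{3-2\alpha}=2^{*}_{\alpha}$; the admissibility conditions $0<3-\alpha<3$ and $2<q<\infty$ hold precisely because $\alpha\in(0,1)$ forces $3-2\alpha\in(1,3)$. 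Hence
\[
\|u\|_{L^{2^{*}_{\alpha}}}=\|I_{\alpha}v\|_{L^{2^{*}_{\alpha}}}\leq C\,\|v\|_{L^2}=C\,\|(-\Delta)^{\alpha/2}u\|_{L^2},
\]
and squaring gives the claimed inequality with $S_{\alpha}:=C^{2}$.

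Finally, for general $u\in D^{\alpha,2}(\mathbb{R}^{3})$ I would take $u_{n}\in C_{0}^{\infty}$ with $u_{n}\to u$ in $D^{\alpha,2}$; the inequality applied to $u_{n}-u_{m}$ shows that $(u_{n})$ is Cauchy in $L^{2^{*}_{\alpha}}$, hence converges to a limit which one identifies with $u$, and the estimate passes to the limit. I expect the only genuinely delicate points to be this identification of the limit together with the rigorous justification of the Riesz representation and the boundedness constant in HLS; the choice of exponents and the role of the dimensional relation $N=3$ are forced and routine. Alternatively, the whole statement is the fractional Sobolev inequality recorded in \cite{Nezza}, which could simply be invoked.
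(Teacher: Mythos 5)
Your proposal is correct: the reduction to $C_{0}^{\infty}$ by density, the identification $u=I_{\alpha}v$ with $v=(-\Delta)^{\alpha/2}u$ via the Fourier symbol $|\xi|^{-\alpha}$, and the application of the Hardy--Littlewood--Sobolev inequality with $p=2$, $\lambda=3-\alpha$, forcing $q=\tfrac{6}{3-2\alpha}=2^{*}_{\alpha}$, is exactly the standard derivation of the fractional Sobolev inequality, and the exponent bookkeeping checks out. The comparison with the paper is somewhat one-sided, however: the paper offers \emph{no} proof of Lemma \ref{lem2-2} at all --- it records the embedding as a known fact from the fractional Sobolev space literature (in effect the route you mention in your last sentence, namely invoking \cite{Nezza}). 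So your argument is not a different route from the paper's so much as a filling-in of details the paper omits. What your version buys is a self-contained justification with an explicit mechanism (Riesz potential plus HLS) and a transparent reason why the constant $S_{\alpha}$ depends only on $\alpha$ and the dimension; what the paper's citation-only treatment buys is brevity, which is reasonable given that the lemma is classical. The only points in your write-up that would need care in a fully rigorous version are the ones you already flag: the pointwise validity of the Riesz representation for $C_{0}^{\infty}$ functions and the identification of the $L^{2^{*}_{\alpha}}$ limit with $u$ in the density step; both are routine.
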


It is easy to reduce \eqref{eq1-1} to a single equation.
Indeed, using the H\"oleder inequality, for every $u\in E$
\begin{equation}\label{eq2-1}
\aligned
\int_{\mathbb{R}^{3}} u^{2}vdx
&\leq \Big(\int_{\mathbb{R}^{3}}|u|^{\frac{12}{3+2t}}dx\Big)^{\frac{3+2t}{6}}
\Big(\int_{\mathbb{R}^{3}}|v|^{2^*_{t}}dx\Big)^{\frac{1}{2^{*}_{t}}}\\
&\leq S_{t}\|u\|^{2}_{L^{\frac{12}{3+2t}}}\|v\|_{D^{t,2}}\\
&\leq C_{\frac{12}{3+2t}}\,S_{t}\|u\|^{2}_{H^{s}}\|v\|_{D^{t,2}},\\
\endaligned
\end{equation}
where using the following inequality
\begin{equation}\label{eq2-2}
E\hookrightarrow L^{\frac{12}{3+2t}}(\mathbb{R}^{3})\,\,\,
\text{if}\,\,\,2t+4s\geq 3.
\end{equation}
Thus, by the Lax-Milgram theorem, there exists a unique $\phi_u^{t}\in D^{t,2}({\mathbb{R}}^3)$ such that
\begin{equation}\label{eq2-1*}
\int_{\mathbb{R}^{3}}v(-\Delta)^{t}\phi_{u}^{t}dx=\int_{\mathbb{R}^{3}}(-\Delta)^{\frac{t}{2}}\phi_{u}^{t} (-\Delta)^{\frac{t}{2}}vdx=\int_{\mathbb{R}^{3}}u^{2}vdx,\quad v\in D^{t,2}(\mathbb{R}^{3}).
\end{equation}
Therefore, $\phi_{u}^{t}$ satisfies the Poisson equation
$$
(-\Delta)^{t}\phi_{u}^{t}=u^{2},\quad x\in \mathbb{R}^{3},
$$
and we can write an integral expression for $\phi_u^{t}$ in the form:
\begin{equation}\label{eq2-3}
  \phi_u^{t}(x)=c_{t}\int_{{\mathbb{R}}^3}\frac{u^2(y)}{|x-y|^{3-2t}}\,\mathrm{d}y,\quad x\in \mathbb{R}^{3},
\end{equation}
which is called $t$-Riesz potential, where $$
c_{t}=\pi^{-\frac{3}{2}}2^{-2t}\frac{\Gamma(\frac{3}{2}-2t)}{\Gamma(t)}.
$$
It follows from \eqref{eq2-3} that $\phi_{u}^{t}(x)\geq 0$ for all $x\in \mathbb{R}^{3}$.
 Combining \eqref{eq2-1} and \eqref{eq2-1*}, we have
$$
\|\phi_{u}^{t}\|_{D^{t,2}}\leq S_{t}\|u\|^{2}_{L^{\frac{12}{3+2t}}}\leq C_{1}\|u\|_{H^{s}}^{2},\quad \text{if}\,\,2t+4s\geq 3.
$$
Hence, by the H\"older inequality and Lemma \ref{lem2-1}, we get
\begin{equation}\label{eq2-3*}
\aligned
\int_{\mathbb{R}^{3}}\phi_{u}^{t}u^{2}dx
&\leq \Big(\int_{\mathbb{R}^{3}}|\phi_{u}^{t}|^{\frac{1}{2^*_{t}}}dx\Big)^{\frac{1}{2^*_{t}}}
\Big(\int_{\mathbb{R}^{3}}|u^{2}|^{\frac{6}{3+2t}}dx\Big)^{\frac{3+2t}{6}}\\
&\leq \widetilde{C}_{1}\|\phi_{u}^{t}\|_{D^{t,2}}\|u\|^{2}_{H^{s}}\\
&\leq \widetilde{C}_{2}\|u\|^{4}_{H^{s}},\\
\endaligned
\end{equation}
where $\widetilde{C}_{1}$, $\widetilde{C}_{2}>0$.
Substituting \eqref{eq2-3} in to \eqref{eq1-1}, we can rewrite \eqref{eq1-1} in the
following equivalent form
\begin{equation*}\label{eq1-1*}
(-\Delta)^{s}u+V(x)u+ \phi_{u}^{t}u=f(x,u),\quad \quad x\in \mathbb{R}^{3}.
\end{equation*}
We define the energy function $I:\,E\to \mathbb{R}$ by
\begin{equation}\label{eq2-4*}
I(u) = \frac{1}{2}\|u\|_E^2+\frac{1}{4}\int_{{\mathbb{R}}^3}\phi_u^{t}
u^2\,\mathrm{d}x-\int_{{\mathbb{R}}^3}F(x,u)\,\mathrm{d}x.
\end{equation}
From $(f_1)$ and \eqref{eq2-3*}, $I$ is well-defined. Furthermore, it is well-known
that $I$ is $C^1$ functional with derivative given by
\begin{equation}\label{eq2-4}
  \langle I'(u),v\rangle=\int_{{\mathbb{R}}^3}\Big((-\Delta)^{\frac{s}{2}}u
 \cdot (-\Delta)^{\frac{s}{2}} v+V(x)uv+\phi_u^{t} uv-f(x,u)v\Big)\,\mathrm{d}x,\quad \forall v\in E.
\end{equation}
Obviously, it can be proved that if $u$ is a critical points of $I$, then the pair $(u\,,\,\phi_{u}^{t})$ is
 a  solutions of system \eqref{eq1-1}.

For reader's convenience, we introduce the Cerami condition (C),
which was established by Cerami \cite{Cerami1978}.

\begin{definition} \label{def2.4} \rm
Let $(X\,,\,\|\cdot\|)$ be a real Banach space, $\Phi\in C^1(X\,,\,\mathbb{R})$.
We say that $\Phi$ satisfies Cerami condition at level $c\in \mathbb{R}$ ( for short $(C)_{c}$) 
if any sequence $\{u_{n}\}\subset X$ such that $\Phi (u_n)\to c$ and
$(1+ \|u_n\|)\| \Phi' (u_n)\| \to 0$ as $n\to \infty$ has a convergence subsequence.
\end{definition}

In order to prove our main results, we shall use the following  Fountain Theorem.
Let $X$ be a reflexive and separable Banach space with the norm $\|\cdot\|$ and
$X=\overline{\bigoplus_{i\in \mathbb{N}}X_{i}}$
with $\text{dim}X_{i}<\infty$ for all $i\in \mathbb{N}$.
Set
$$
W_k=\bigoplus \limits_{i=1}^k X_i\quad \text{and}\quad
Z_k=\overline{\bigoplus _{i\geq k}X_i}.
$$
\begin{theorem}[Fountain Theorem]\label{ft}
Assume that function $\Phi\in C^{1}(X\,,\,\mathbb{R})$ satisfies $\Phi(-u)=\Phi(u)$. For almost every $k\in \mathbb{N}$,
there exist $\rho_{k}>r_{k}>0$ such that
\begin{itemize}
\item [(i)] $a_k:=\max _{u\in W_k, \|u\|=\rho_k}\Phi(u)\leq 0$,
 \item [(ii)] $b_k:=\inf _{u\in Z_k, \|u\|=r_k}\Phi(u)\to +\infty$,
 as $k\to \infty$,
    \item [(iii)] $\Phi$ satisfies the $(C)_{c}$-condition for all $c>0$.
  \end{itemize}
Then $\Phi$ has a sequence of critical points $\{u_{k}\}$ such that $\Phi(u_{k})\to +\infty$ as $k\to \infty$.
\end{theorem}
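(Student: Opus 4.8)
The plan is to run a $\mathbb{Z}_2$-minimax scheme adapted to the Cerami condition, in the spirit of Bartsch's original argument and its presentation in Willem's monograph. With $W_k=\bigoplus_{i=1}^k X_i$ and $Z_k=\overline{\bigoplus_{i\ge k}X_i}$ as in the statement, set
\[
B_k=\{u\in W_k:\|u\|\le \rho_k\},\qquad N_k=\{u\in Z_k:\|u\|=r_k\}.
\]
Since $\rho_k>r_k$, the set $N_k$ serves as a linking sphere for the disc $B_k$. I would introduce the class of admissible odd maps
\[
\Gamma_k=\{\gamma\in C(B_k,X):\gamma \text{ is odd and } \gamma|_{\partial B_k}=\mathrm{id}\},
\]
which is nonempty because $\mathrm{id}\in\Gamma_k$, and define the minimax values
\[
c_k=\inf_{\gamma\in\Gamma_k}\,\max_{u\in B_k}\Phi(\gamma(u)).
\]
Because $B_k$ is compact (it lies in the finite-dimensional space $W_k$) and $\Phi$ is continuous, each inner maximum is attained and $c_k\le \max_{B_k}\Phi<\infty$, so $c_k$ is well defined.

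First I would establish the lower bound $c_k\ge b_k$. This rests on the intersection property that every $\gamma\in\Gamma_k$ satisfies $\gamma(B_k)\cap N_k\ne\emptyset$. To see it, let $P_{k-1}$ denote the finite-rank projection of $X$ onto $W_{k-1}$, so that $u\in Z_k$ exactly when $P_{k-1}u=0$; the existence of $u\in B_k$ with $P_{k-1}\gamma(u)=0$ and $\|\gamma(u)\|=r_k$ then follows from a Borsuk--Ulam / topological-degree argument applied to the odd map $u\mapsto(P_{k-1}\gamma(u),\|\gamma(u)\|-r_k)$ on the finite-dimensional ball $B_k$, using $\gamma|_{\partial B_k}=\mathrm{id}$ together with $\rho_k>r_k$ to control the boundary behaviour. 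Granting the intersection, for each $\gamma$ there is $u_0$ with $\gamma(u_0)\in N_k$, whence $\max_{B_k}\Phi\circ\gamma\ge \Phi(\gamma(u_0))\ge\inf_{N_k}\Phi=b_k$; taking the infimum over $\gamma$ gives $c_k\ge b_k$, and condition (ii) yields $c_k\to+\infty$.

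Next I would show that each $c_k$ is a critical value of $\Phi$. For $k$ large one has $c_k\ge b_k>0$, so the Cerami condition (iii) applies at level $c_k$. Arguing by contradiction, if $c_k$ were regular then the quantitative deformation lemma---in the Cerami form, where the pseudo-gradient flow is built from a vector field weighted by $(1+\|u\|)$ to compensate for the decay of $\|\Phi'\|$ recorded in $(1+\|u_n\|)\|\Phi'(u_n)\|\to0$---produces, for small $\varepsilon>0$, an \emph{odd} homeomorphism $\eta$ with $\eta(\Phi^{c_k+\varepsilon})\subset\Phi^{c_k-\varepsilon}$ and $\eta=\mathrm{id}$ on $\Phi^{c_k-2\varepsilon}$. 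Choosing $\gamma\in\Gamma_k$ with $\max_{B_k}\Phi\circ\gamma\le c_k+\varepsilon$, the composition $\eta\circ\gamma$ is again odd, and since $\Phi\le a_k\le0<c_k-2\varepsilon$ on $\partial B_k$ by (i), $\eta$ fixes the boundary, so $\eta\circ\gamma\in\Gamma_k$. But then $\max_{B_k}\Phi\circ(\eta\circ\gamma)\le c_k-\varepsilon$, contradicting the definition of $c_k$. Hence $c_k$ is a critical value, and picking critical points $u_k$ with $\Phi(u_k)=c_k$ gives $\Phi(u_k)\to+\infty$.

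The main obstacle is the construction of the \emph{odd} Cerami deformation in the third step: one must build a locally Lipschitz pseudo-gradient field that is odd (so the flow preserves the $\mathbb{Z}_2$-symmetry and hence the class $\Gamma_k$) and weighted so that the flow decreases $\Phi$ by a definite amount across the level $c_k$ even though $\|\Phi'\|$ may degenerate; the Cerami condition (iii) is exactly what prevents trajectories from escaping to infinity while lingering near the critical level, which is where the $(1+\|u\|)$ weighting and the completeness of $X$ enter. The intersection lemma of the second step is the other delicate point, though it reduces to a finite-dimensional degree computation once the projections $P_{k-1}$ are in place.
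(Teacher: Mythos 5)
The paper does not actually prove Theorem \ref{ft}: it is quoted as a known result, with a citation of Rabinowitz's lecture notes for the $(PS)$ version and a one-line remark that the deformation theorem remains valid under the Cerami condition, so there is no in-paper argument to compare against. Your sketch reconstructs the standard Bartsch--Willem proof (minimax over the class $\Gamma_k$ of odd maps fixing $\partial B_k$, the intersection property $\gamma(B_k)\cap N_k\neq\emptyset$ giving $c_k\ge b_k\to+\infty$, and an odd Cerami-type deformation showing each $c_k$ is critical), and this outline is correct; it is precisely the argument the paper implicitly relies on. One detail to repair when writing it out: the map $u\mapsto\bigl(P_{k-1}\gamma(u),\,\|\gamma(u)\|-r_k\bigr)$ is \emph{not} odd, since its second coordinate is even in $u$, so Borsuk--Ulam cannot be applied to it directly. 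The standard fix is to set $U=\{u\in B_k:\|\gamma(u)\|<r_k\}$, observe that $U$ is a bounded open symmetric neighbourhood of $0$ in the finite-dimensional space $W_k$ with $\overline{U}\subset\mathrm{int}\,B_k$ (because $\gamma=\mathrm{id}$ on $\partial B_k$ and $\rho_k>r_k$), and apply Borsuk--Ulam to the odd map $P_{k-1}\circ\gamma$ restricted to $\partial U$, which takes values in the lower-dimensional space $W_{k-1}$ and must therefore vanish at some $u\in\partial U$; at such a point $\gamma(u)\in Z_k$ and $\|\gamma(u)\|=r_k$, which is the desired intersection. With that adjustment, and the construction of an odd, $(1+\|u\|)$-weighted pseudo-gradient flow that you correctly identify as the main technical burden of the Cerami setting, your sketch is a faithful and complete route to the theorem as it is used in the paper.
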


\begin{remark}\rm
 In \cite{r1986}, the fountain theorem and mountain pass theorem were established under the $(PS)$ condition
respectively. Since the deformation theorem also holds under the $(C)_c$ condition, these theorems are true when the $(C)_c$
condition is used instead of the $(PS)$ condition.
\end{remark}

\section{Proof of Theorem \ref{th1-2} and \ref{th1-2}}
In this section, we shall apply the Fountain Theorem to find the critical points of $I$.
We first show that the functional $I$ satisfies the $(C)_{c}$ condition for any $c\in \mathbb{R}$.

\begin{lemma} \label{lem3-1}
Suppose that $(V)$, $(f_1)-(f_3)$ hold. Then the functional $I$
satisfies the $(C)_{c}$-condition for all $c\in \mathbb{R}$ .
\end{lemma}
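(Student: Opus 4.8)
The plan is to take an arbitrary Cerami sequence $\{u_n\}\subset E$ at level $c$, i.e. $I(u_n)\to c$ and $(1+\|u_n\|_E)\|I'(u_n)\|\to 0$, and to argue in three stages: first that $\{u_n\}$ is bounded in $E$, then that (up to a subsequence) it converges weakly to some $u$ with strong convergence in the subcritical Lebesgue spaces furnished by Lemma \ref{lem2-1}, and finally that the convergence is strong in $E$. From $(1+\|u_n\|_E)\|I'(u_n)\|\to 0$ I immediately get $\langle I'(u_n),u_n\rangle\to 0$, which together with $I(u_n)\to c$ is the only input to the boundedness step.

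Boundedness is the heart of the matter and the place where the absence of the Ambrosetti--Rabinowitz condition bites. The key algebraic identity is
\begin{equation*}
I(u_n)-\tfrac14\langle I'(u_n),u_n\rangle=\tfrac14\|u_n\|_E^2+\tfrac14\int_{\mathbb{R}^3}\big(u_nf(x,u_n)-4F(x,u_n)\big)\,dx,
\end{equation*}
in which the quartic nonlocal term $\int\phi_{u_n}^{t}u_n^2$ cancels exactly because it is $4$-homogeneous, leaving precisely the quantity controlled by $(f_3)$. Since the left-hand side tends to $c$, this reads $\tfrac14\|u_n\|_E^2=c+o(1)-\tfrac14\int\mathcal F(x,u_n)$ with $\mathcal F=uf(x,u)-4F$. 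I will argue by contradiction, supposing $\|u_n\|_E\to\infty$ along a subsequence and setting $v_n=u_n/\|u_n\|_E$, so that $\|v_n\|_E=1$ and, up to a further subsequence, $v_n\rightharpoonup v$ in $E$, $v_n\to v$ in $L^p(\mathbb{R}^3)$ for every $p\in[2,2^*_s)$, and $v_n\to v$ a.e.

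To exclude $v\neq 0$ I will use $(f_2)$: on the positive-measure set $\{v\neq 0\}$ one has $|u_n(x)|\to\infty$, hence $F(x,u_n)/|u_n|^4\to+\infty$, so $F(x,u_n)/\|u_n\|_E^4=|v_n|^4\,F(x,u_n)/|u_n|^4\to+\infty$, and Fatou's lemma forces $\int F(x,u_n)/\|u_n\|_E^4\to+\infty$. On the other hand, dividing $I(u_n)\to c$ by $\|u_n\|_E^4$ and using the estimate \eqref{eq2-3*} (which shows the rescaled nonlocal term $\int\phi_{v_n}^{t}v_n^2$ stays bounded) keeps $\int F(x,u_n)/\|u_n\|_E^4$ bounded, a contradiction; hence $v=0$. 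To exclude $v=0$ I will introduce $t_n\in[0,1]$ with $I(t_nu_n)=\max_{t\in[0,1]}I(tu_n)$ and test against $Rv_n$ for a fixed large $R$: since $\|u_n\|_E\to\infty$ the value $R/\|u_n\|_E$ lies in $(0,1)$ for large $n$, and because $v_n\to 0$ in the relevant $L^p$ spaces both $\int\phi_{v_n}^{t}v_n^2$ and $\int F(x,Rv_n)$ vanish, giving $I(t_nu_n)\ge I(Rv_n)=\tfrac{R^2}{2}+o(1)$, which can be made arbitrarily large; meanwhile the identity above combined with $(f_3)$ and $\langle I'(t_nu_n),t_nu_n\rangle\to 0$ keeps $I(t_nu_n)$ bounded, again a contradiction. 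This closes the boundedness step.

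Once $\{u_n\}$ is bounded I pass to a subsequence with $u_n\rightharpoonup u$ in $E$ and, by Lemma \ref{lem2-1}, $u_n\to u$ strongly in $L^p(\mathbb{R}^3)$ for $2\le p<2^*_s$ and a.e. The final step is to show $\|u_n-u\|_E\to 0$. Testing $\langle I'(u_n)-I'(u),u_n-u\rangle$, which tends to $0$ since $\|I'(u_n)\|\to 0$ and $u_n-u\rightharpoonup 0$, the quadratic part reproduces $\|u_n-u\|_E^2$; the growth bound $(f_1)$ with the strong $L^p$ convergence makes $\int\big(f(x,u_n)-f(x,u)\big)(u_n-u)\to 0$, and the estimates leading to \eqref{eq2-3*} together with the compact embedding make the nonlocal remainder $\int\big(\phi_{u_n}^{t}u_n-\phi_u^{t}u\big)(u_n-u)\to 0$. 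Hence $\|u_n-u\|_E\to 0$, so $\{u_n\}$ has a convergent subsequence. The step I expect to be genuinely delicate is the boundedness, and within it the case $v=0$, since this is exactly where the missing \textup{(AR)} condition must be replaced by the interplay of $(f_2)$ and $(f_3)$.
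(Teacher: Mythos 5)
Your overall architecture (contradiction via the normalized sequence $v_n=u_n/\|u_n\|_E$, dichotomy $v\neq 0$ versus $v=0$, then the standard strong-convergence step) matches the paper, and the case $v\neq 0$ and the final convergence step are essentially the paper's argument. However, there is a genuine gap in your treatment of the case $v=0$, which you yourself identify as the delicate point. You invoke the Jeanjean maximizer $t_n\in[0,1]$ with $I(t_nu_n)=\max_{t\in[0,1]}I(tu_n)$ and claim that ``the identity above combined with $(f_3)$ and $\langle I'(t_nu_n),t_nu_n\rangle\to 0$ keeps $I(t_nu_n)$ bounded.'' It does not: the identity gives $I(t_nu_n)=\tfrac14\|t_nu_n\|_E^2+\tfrac14\int\mathcal F(x,t_nu_n)\,dx+o(1)$, and $(f_3)$ only provides a \emph{lower} bound $\mathcal F(x,u)\geq -c|u|^2$, hence only a lower bound on $I(t_nu_n)$ --- the wrong direction for a contradiction with $I(t_nu_n)\to+\infty$. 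To transfer the bound $I(u_n)-\tfrac14\langle I'(u_n),u_n\rangle=c+o(1)$ from $u_n$ to $t_nu_n$ one needs precisely the comparison $\theta\,\mathcal F(x,u)\geq\mathcal F(x,\tau u)$ of hypothesis $(f_4)$; that is why the paper reserves the maximizer trick for Lemma \ref{lem3-2} and not for Lemma \ref{lem3-1}.

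Under $(f_3)$ the correct (and simpler) argument, which the paper uses, is direct and needs no maximizer: combining $(f_3)$ for $|u|\geq L$ with the growth bound for $|u|\leq L$ yields $u f(x,u)-4F(x,u)\geq -c_5|u|^2$ for all $(x,u)$, whence
$$
c+o(1)=I(u_n)-\tfrac14\langle I'(u_n),u_n\rangle\geq \tfrac14\bigl(1-c_5\|v_n\|_{L^2}^2\bigr)\|u_n\|_E^2,
$$
and since $v=0$ forces $\|v_n\|_{L^2}\to 0$ by the compact embedding, the right-hand side tends to $+\infty$, a contradiction. A secondary, more minor point: in the case $v\neq 0$ you apply Fatou's lemma to $\int_{\mathbb{R}^3}F(x,u_n)/\|u_n\|_E^4$ over the whole space; this requires the integrable lower bound $F(x,u)\geq -c_1|u|^2$ (obtained from $(f_1)$ and $(f_2)$) to control the integral over the set where $v=0$, a step you should make explicit as the paper does.
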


\begin{proof}
Let $\{ u_{n} \}\subset E$ be a  $(C)_{c}$ sequence of $I$, that is,
\begin{equation}\label{eq2-13}
I(u_{n})\to c\,\,\,\text{and}\,\,\,  (1+\| u_{n} \|_{E})I' (u_{n})\to 0\,\,\text{as}\,\,\,n\to \infty.
\end{equation}

In what follows, we shall show that $\{u_{n}\}$ is bounded in $E$.
Otherwise, up to a subsequence,  $\{u_{n}\}$ is unbounded in $E$,
and we may assume that $\|u_{n}\|_{E}\to \infty$ as $n\to \infty$.
We define the sequence $\{w_{n}\}$ by
$$
w_{n}=\frac{u_{n}}{\|u_{n}\|_{E}},\quad n=1\,,\,2\,,\,3\,,\,\cdot\cdot\cdot.
$$
Clearly, $\{w_{n}\}\subset E$ and $\|w_{n}\|_{E}=1$ for all $n\in \mathbb{N}$.
Going over, if necessary, to a subsequence, we may assume that
\begin{equation} \label{eq2-14}
\aligned
   &w_{n}\rightharpoonup w  \quad \text{weakly in }\,\,\,E,\\
   &w_{n}\to w            \quad \text{strongly in }\,\,L^{p}(\mathbb{R}^3), \,\,2 \leq p < 2^{*}_{s},\\
   & w_{n}(x)\to w(x)  \quad \text{a.e.}\,\,x\in \mathbb{R}^3.
\endaligned
\end{equation}

We first consider the case $w\neq 0$ in $\mathbb{R}^{3}$,
then the set $\Omega := \{ x \in \mathbb{R}^3 | w(x) \neq 0 \}$
has positive Lebesgue measure. For all $x\in \Omega$,
we have $|u_{n}(x)|\to \infty$ as $n\to \infty$,
so that, using $(f_2)$, for all $x\in \Omega$,
$$
  \frac{F(x, u_{n})}{\|u_{n}\|_{E}^{4}}=\frac{F(x, u_{n})}{|u_{n}|^{4}}\frac{|u_{n}|^{4}}{\|u_{n}\|_{E}^{4}}
= \frac{F(x, u_{n})}{|u_{n}|^{4}}|w_{n}|^{4} \to +\infty \,\,\,\text{as}\,\,\,n \to \infty,
$$
and then, via Fatou's Lemma,
\begin{equation}\label{eq2-5}
  \int_{\Omega} \frac{F(x, u_{n})}{\|u_{n}\|_{E}^{4}} \,\mathrm{d}x \to +\infty\,\,\,\text{as}\,\,\,n \to \infty.
\end{equation}

On the other hand, by $(f_2)$, there exists $L>0$ such that
\begin{equation}\label{eq2-6}
F(x,u)\geq 0,\quad \forall x\in \mathbb{R}^{3}\,,\,\,|u|>L.
\end{equation}
Moreover, it follows from $(f_1)$ that for any $\varepsilon>0$ there exists $c(\varepsilon)>0$
such that for all $x\in \mathbb{R}^{3}$, $|u|\leq L$, we have
\begin{equation}\label{eq2-6*}
|f(x,u)|\leq \varepsilon|u|+c(\varepsilon)|u|^{p}.
\end{equation}
Then, by the mean value theorem, for all $|u|<L$,  we obtain
\begin{equation}\label{eq2-9}
\aligned
|F(x,u)|
&=|F(x,u)-F(x,0)|= \int_{0}^{1}|f(x,\eta u)ud\eta\\
&\leq \frac{\varepsilon}{2}|u|^{2}+\frac{c(\varepsilon)}{p}|u|^{p}\\
&\leq c_{1}|u|^{2},
\endaligned
\end{equation}
where $c_{1}=\frac{\varepsilon}{2}+c(\varepsilon)\frac{L^{p-2}}{p}>0$. Combining this with \eqref{eq2-6}, we have
\begin{equation}\label{eq2-7}
F(x,u)\geq -c_{1}|u|^{2},\quad \forall (x,u)\in \mathbb{R}^{3}\times \mathbb{R},
\end{equation}
which implies that there exists $c_{2}>0$ such that
$$
F(x,u_{n})\geq -c_{2}|u_{n}|^{2},\quad \text{for all}\,\,\,x\in \mathbb{R}^{3}\setminus \Omega.
$$
Hence, we obtain
$$
\aligned
\int_{\mathbb{R}^{3}\setminus \Omega}\frac{F(x,u_{n})}{\|u_{n}\|^{4}_{E}}dx
&\geq- \frac{c_{2}}{\|u_{n}\|^{4}_{E}}\int_{\mathbb{R}^{3}\setminus \Omega}|u_{n}|^{2}dx\\
&\geq- \frac{c_{2}}{\|u_{n}\|^{4}_{E}}\int_{\mathbb{R}^{3}}|u_{n}|^{2}dx\\
&\geq -c_{3}\frac{\|u_{n}\|^{2}_{E}}{\|u_{n}\|_{E}^{4}},\quad c_{3}>0,
\endaligned
$$
which implies that
\begin{equation}\label{eq2-7}
\liminf\limits_{n\to \infty}\int_{\mathbb{R}^{3}\setminus \Omega}\frac{F(x,u_{n})}{\|u_{n}\|^{4}_{E}}dx\geq0.
\end{equation}
So,  combining \eqref{eq2-5} with \eqref{eq2-7}, one has
\begin{equation}\label{eq2-8}
\lim\limits_{n\to \infty}\int_{{\mathbb{R}}^3}\frac{F(x,u_{n})}{ \|u_{n}\|_{E}^{4} } \,dx=
\lim\limits_{n\to \infty}\Big(\int_{\Omega}+\int_{\mathbb{R}^{3}\setminus \Omega}\Big)\frac{F(x,u_{n})}{ \|u_{n}\|_{E}^{4} } \,dx=+\infty.
\end{equation}

Note
$$
\frac{1}{2}\|u_{n}\|_{E}^{2}+\frac{1}{4}\int_{\mathbb{R}^{3}}\phi_{u_n}^{t}u_{n}^{2}dx-\int_{\mathbb{R}^{3}}F(x,u_{n})=c+o(1).
$$
Dividing both sides by $\|u_{n}\|_{E}^{4}$ and letting $n\to \infty$, we deduce via \eqref{eq2-3*} that
\begin{equation*}\label{3.5}
\aligned
4\int_{{\mathbb{R}}^3}\frac{F(x,u_{n})}{ \|u_{n}\|_{E}^{4} }dx
&= \frac{2}{\|u_{n}\|_{E}^{2}} +\frac{\int_{{\mathbb{R}}^3}\phi_{u_{n}}^{t} u_{n}^2 dx}{\|u_{n}\|_{E}^{4}}
-\frac{4c}{\|u_{n}\|_{E}^{4}} + o(\| u_n \|_{E}^{-4})\\
&\leq \widetilde{C}_{2}+ o(\| u_n \|_{E}^{-4})< \infty,
\endaligned
\end{equation*}
where $\widetilde{C}_2$ is a positive constant. This contradicts \eqref{eq2-8}.

For the second  case $w=0$. It follows from \eqref{eq2-6*}and \eqref{eq2-9}  that,
for all $x\in \mathbb{R}^{3}$ and $|u|\leq L$,
$$
|uf(x,u)-4F(x,u)|\leq (\varepsilon+4a_{1})|u|^{2}+c(\varepsilon)|u|^{p}\leq c_{4}|u|^{2},
$$
where $c_{4}=(\varepsilon +4a_{1})+c(\varepsilon)L^{p-2}>0$. This, together with $(f_{3})$, obtain that
\begin{equation}\label{eq2-12}
uf(x,u)-4F(x,u)\geq -c_{5}|u|^{2},\quad \forall (x,u)\in \mathbb{R}^{3}\times \mathbb{R},
\end{equation}
where $c_5$ is a positive constant.
Therefore, from \eqref{eq2-13},\eqref{eq2-14} and \eqref{eq2-12}, for $n$ large enough, we get
\begin{equation*}\label{eq2-15}
\aligned
c+o(1)
&= I(u_n)-\frac{1}{4}\langle I'(u_n),u_{n}\rangle\\
&= \frac{1}{4}\| u_{n} \|^{2}_{E}
 + \frac{1}{4} \int_{{\mathbb{R}}^3}\Big(f(x,u_n)u_{n}-4F(x, u_{n})\Big)\,dx\\
 &\geq \frac{1}{4}\| u_{n} \|^{2}_{E}-\frac{1}{4}c_{5}\int_{\mathbb{R}^{3}}|u_{n}|^{2}dx\\
 &=\frac{1}{4}\Big(1-c_{5}\int_{\mathbb{R}^{3}}|w_{n}|^{2}dx\Big)\|u_{n}\|_{E}^{2}\to \infty,\\
\endaligned
\end{equation*}
as $n\to \infty$, which is contradiction.

In any case, we deduce a contradiction. Hence $\{u_{n}\}$ is bounded in $E$.

Next, we verify that $\{u_{n}\}$ has a convergent subsequence.
Without loss of generality, we assume that
\begin{equation*}\label{eq2-16}
\aligned
& u_{n}\rightharpoonup u,\,\,\,\text{weakly in}\,\,\,E;\\
& u_{n}\to u,\,\,\,\text{strongly in}\,\,\,L^{p},\,\,2\leq p<2^*_{s}.\\
\endaligned
\end{equation*}
By \eqref{eq2-4}, we easily get
\begin{equation*}\label{eq2-17}
\aligned
\|u_n-u\|_E^2& =\langle I'(u_n) - I'(u), u_n-u\rangle+
  \int_{{\mathbb{R}}^3}(\phi_{u_n}^{t}u_n- \phi_u^{t} u)(u_n-u)\,dx\\
 & \quad -\int_{{\mathbb{R}}^3}(f(x,u_n)-f(x,u))(u_n-u)\,dx.
\endaligned
\end{equation*}
It is clear that
$$
  \langle I'(u_n)-I'(u), u_n-u\rangle \to 0\,\,\,\text{as}\,\,\,n\to \infty.
$$
According to $(f_1)$ and the H\"older inequality, we get
\begin{align*}
    & \int_{{\mathbb{R}}^3}(f(x,u_n)-f(x,u))(u_n-u)\,dx\\
    & \leq \int_{{\mathbb{R}}^3} \Big[ \frac{a_1}{2}(|u_n|+|u|) + \frac{a_1}{p} \Big( |u_n|^{p-1}+|u|^{p-1} \Big) \Big] |u_n-u| \,dx\\
    & \leq \frac{a_1}{2} \Big(\|u_n\|_{L^2}^2+\|u\|_{L^2}^2\Big)\|u_n-u\|_{L^2}^2 +\frac{a_1}{p} \Big(\|u_n\|_{L^p}^{p-1}+\|u\|_{L^p}^{p-1}\Big)\|u_n-u\|_{L^p}.
\end{align*}
Since $u_n\to u$ strongly in $L^p({\mathbb{R}}^3)$ for any $p\in [2, 2^*_{s})$, we have
$$
  \int_{{\mathbb{R}}^3}(f(x,u_n)-f(x,u))(u_n-u)\,dx\to 0\quad
 \text{as } \,\, n\to\infty.
$$
Moreover, by the H\"older inequality, Sobolev inequality and \eqref{eq2-2}, we have
\begin{equation*}
\aligned
&\big|\int_{{\mathbb{R}}^3}\phi_{u_n}^{t}u_n(u_n-u)\,\mathrm{d}x \big|\\
&\leq \Big(\int_{\mathbb{R}^{3}}|\phi_{u_n}^{t}|^{2^*_{t}}dx\Big)^{\frac{1}{2^*_{t}}}
\Big(\int_{\mathbb{R}^{3}}|{u_n}|^{\frac{12}{3+2t}}dx\Big)^{\frac{3+2t}{12}}
\Big(\int_{\mathbb{R}^{3}}|u_n-u|^{\frac{12}{3+2t}}dx\Big)^{\frac{3+2t}{12}}\\
&\leq S_{t}C_{\frac{12}{3+2t}}\|\phi_{u_n}\|_{D^{t,2}}\|{u_n}\|_{E}\|u_n-u\|_{E}\\
&\leq c_{6}\|{u_n}\|_{E}^{3}\|u_n-u\|_{E},\\
\endaligned\end{equation*}
where $c_{6}>0$ is a constant. Again using $u_n\rightharpoonup u$ in
$E$ and $\{u_{n}\}$ is bounded in $E$, we have
$$
  \int_{{\mathbb{R}}^3}\phi_{u_n}^{t}u_n(u_n-u)\,dx\to 0\quad
\text{as }\,\, n\to\infty.
$$
Similarly, we  obtain
$$
  \int_{{\mathbb{R}}^3}\phi_{u}^{t}u(u_n-u)\,dx\to 0 \quad
\text{as }\,\, n\to\infty.
$$
Thus,
$$
  \int_{{\mathbb{R}}^3}(\phi_{u_n}^{t}u_n-\phi_u^{t}u)(u_n-u)\,dx\to 0\quad
 \text{as } \,\, n\to\infty,
$$
so that $\|u_n-u\|_E\to 0$. Therefore, we prove that $I$ satisfies $(C)_{c}$ condition for any $c\in \mathbb{R}$.
\end{proof}

\begin{lemma}\label{lem3-2}
Suppose that $(V)$, $(f_1)$, $(f_2)$ and $(f_4)$ hold. Then the functional $I$
satisfies the $(C)_{c}$ condition for all $c\in \mathbb{R}$ .
\end{lemma}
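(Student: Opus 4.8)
The plan is to follow the two-step structure of the proof of Lemma \ref{lem3-1}: first show that any $(C)_c$ sequence $\{u_n\}$ is bounded in $E$, and then show that a bounded $(C)_c$ sequence admits a convergent subsequence. The second step uses only $(f_1)$, the compact embedding of Lemma \ref{lem2-1}, and the estimate \eqref{eq2-3*} for the nonlocal term, none of which involve $(f_3)$; hence that part of the argument carries over verbatim from Lemma \ref{lem3-1}. Thus the only genuinely new point is to establish boundedness using $(f_4)$ in place of $(f_3)$.

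For boundedness I argue by contradiction, assuming $\|u_n\|_E \to \infty$ and setting $w_n = u_n/\|u_n\|_E$, so that $\|w_n\|_E = 1$ and, up to a subsequence, $w_n \rightharpoonup w$ in $E$, $w_n \to w$ in $L^p(\mathbb{R}^3)$ for $2 \le p < 2^*_s$, and $w_n \to w$ a.e. When $w \neq 0$ the contradiction is obtained exactly as in the first case of Lemma \ref{lem3-1}: on $\Omega = \{w \neq 0\}$ one has $|u_n| \to \infty$, so $(f_2)$ and Fatou's lemma force $\int_{\mathbb{R}^3} F(x,u_n)/\|u_n\|_E^4\,dx \to +\infty$, while dividing $I(u_n) = c + o(1)$ by $\|u_n\|_E^4$ and using \eqref{eq2-3*} shows this same quantity stays bounded.

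The case $w = 0$ is where $(f_4)$ enters, through the monotonicity trick of Jeanjean \cite{lj1999}. For each $n$ choose $t_n \in [0,1]$ with $I(t_n u_n) = \max_{t \in [0,1]} I(t u_n)$. Given any fixed $R > 0$, for $n$ large one has $R/\|u_n\|_E \in (0,1)$, so by maximality $I(t_n u_n) \ge I(R w_n)$. Since $w_n \to 0$ in $L^2$ and $L^p$, the growth bound coming from $(f_1)$ gives $\int_{\mathbb{R}^3} F(x, R w_n)\,dx \to 0$, while $\|w_n\|_E = 1$ and $\int_{\mathbb{R}^3} \phi_{w_n}^t w_n^2\,dx \ge 0$ yield $\liminf_n I(R w_n) \ge R^2/2$. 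As $R$ is arbitrary, $I(t_n u_n) \to +\infty$. Because $I(0) = 0$ and $I(u_n) \to c$ are bounded, for large $n$ the maximizer must be interior, $t_n \in (0,1)$, whence $\langle I'(t_n u_n), t_n u_n \rangle = 0$. Using the identity $I(v) - \tfrac14 \langle I'(v), v \rangle = \tfrac14\|v\|_E^2 + \tfrac14 \int_{\mathbb{R}^3} \mathcal{F}(x,v)\,dx$ with $v = t_n u_n$, together with $(f_4)$ in the form $\int_{\mathbb{R}^3} \mathcal{F}(x, t_n u_n)\,dx \le \theta \int_{\mathbb{R}^3} \mathcal{F}(x, u_n)\,dx$ and $t_n \le 1$, one gets
\begin{equation*}
I(t_n u_n) \le \tfrac14 \|u_n\|_E^2 + \tfrac{\theta}{4} \int_{\mathbb{R}^3} \mathcal{F}(x, u_n)\,dx.
\end{equation*}
On the other hand, the same identity with $v = u_n$ and the Cerami condition give $\tfrac14\|u_n\|_E^2 + \tfrac14 \int_{\mathbb{R}^3} \mathcal{F}(x,u_n)\,dx = I(u_n) - \tfrac14\langle I'(u_n), u_n\rangle \to c$. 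Writing the right-hand side of the displayed bound as $\big(\tfrac14\|u_n\|_E^2 + \tfrac14\int_{\mathbb{R}^3} \mathcal{F}(x,u_n)\,dx\big) + \tfrac{\theta-1}{4}\int_{\mathbb{R}^3} \mathcal{F}(x,u_n)\,dx$ and recalling $\tfrac14\|u_n\|_E^2 \ge 0$, the divergence $I(t_n u_n) \to +\infty$ forces $\int_{\mathbb{R}^3} \mathcal{F}(x,u_n)\,dx \to +\infty$ when $\theta > 1$ (the case $\theta = 1$ being immediate), which in turn makes $\tfrac14\|u_n\|_E^2 \to -\infty$, a contradiction. Hence $\{u_n\}$ is bounded.

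The main obstacle is the $w = 0$ case: unlike under $(f_3)$, one cannot bound $uf(x,u) - 4F(x,u)$ below by $-c|u|^2$, so the direct estimate of Lemma \ref{lem3-1} is unavailable and the scaling/maximization argument of Jeanjean is needed. The delicate points there are verifying that $I(t_n u_n) \to +\infty$ (which relies on the vanishing $\int_{\mathbb{R}^3} F(x, R w_n)\,dx \to 0$, hence on the strong $L^p$-convergence $w_n \to 0$) and checking that the maximizer $t_n$ is interior so that $\langle I'(t_n u_n), t_n u_n\rangle = 0$; once these are in place, $(f_4)$ closes the argument and the compactness step of Lemma \ref{lem3-1} finishes the proof.
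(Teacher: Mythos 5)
Your proof is correct and follows essentially the same route as the paper: the same reduction to boundedness, the same Jeanjean maximization $I(\eta_n u_n)=\max_{\eta\in[0,1]}I(\eta u_n)$ with $I(\eta_n u_n)\to+\infty$ forced by scaling $w_n$, and the same use of $(f_4)$ via the identity $I(v)-\tfrac14\langle I'(v),v\rangle=\tfrac14\|v\|_E^2+\tfrac14\int_{\mathbb{R}^3}\mathcal{F}(x,v)\,dx$. The only difference is cosmetic: the paper divides the comparison inequality by $\theta$ to contradict $I(u_n)-\tfrac14\langle I'(u_n),u_n\rangle\to c$ directly, whereas you keep $\theta$ on the other side and split off $\tfrac{\theta-1}{4}\int\mathcal{F}(x,u_n)\,dx$; both are valid.
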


\begin{proof}
Like in the proof of Lemma \ref{lem3-1}, it suffices to consider the case $w\neq 0$
and $w=0$, the $(C)_{c}$ sequence $\{u_{n}\}$ is bounded in $E$.

If $w\neq 0$, the proof is identical to that of Lemma \ref{lem3-1}.

If $w= 0$, inspired by \cite{lj1999}, we choose a sequence
 $\{\eta_{n}\}\subset\mathbb{R}$ such that
$$
  I(\eta_{n}u_{n})=\max_{\eta\in[0,1]}I(\eta u_{n}).
$$
Fix any $m >0$, letting $v_{n}=\sqrt{4m}\, w_{n}$, one has
\begin{equation}\label{eq2-9-1}
\aligned
&v_{n}\to 0\quad \text{in}\,\,\,L^{p}(\mathbb{R}^{3}),\,\,1\leq p<2^*_{s},\\
&v_{n}\to 0\quad \text{a.e.}\,\,\,x\in \mathbb{R}^{3}.\\
\endaligned
\end{equation}
Then, by \eqref{eq2-6*}, \eqref{eq2-9-1} and Lebesgue dominated convergence theorem,
$$
  \lim_{n\to\infty}\int_{\mathbb{R}^3}F(x,v_{n})\,dx
\leq \lim_{n\to\infty}\Big(\frac{ \varepsilon}{2}
\int_{\mathbb{R}^3}|v_{n}|^{2} \,dx
+ \frac{c(\varepsilon)}{p} \int_{\mathbb{R}^3}|v_{n}|^{p} \,dx \Big)=0,
$$
So, for $n$ sufficiently large, we obtain
\begin{equation*}
I(\eta_nu_n)\geq  I(v_{n})=  2m + \frac{1}{4} \int_{\mathbb{R}^3} \phi_{v_{n}}^{t}
v_{n}^{2} \,dx
-\int_{\mathbb{R}^3}F(x,v_{n})\,dx
 \geq  2m,
\end{equation*}
which implies that $\liminf_{n\to \infty}I(\eta_{n}u_{n})\geq 2m$. By the arbitrariness of $m$, we have
\begin{equation*}\label{infty}
\lim\limits_{n\to\infty}I(\eta_{n}u_{n})=+\infty.
\end{equation*}
Since $I(0)=0$ and $I(u_{n})\to c$ as $n\to \infty$, $I(\eta u_{n})$ attains maximum at $\eta_{n}\in (0\,,\,1)$,
Thus, $\langle I'(\eta_{n}u_{n})\,,\,\eta_{n}u_{n}\rangle=o(1)$ for large $n$.
Therefore, using ($f_4$),
\begin{equation*}\aligned
   I(u_n)&-\frac{1}{4}\langle I'(u_n),u_{n}\rangle
     =\frac{1}{4} \| u_{n}\|_{E}^{2} + \frac{1}{4} \int_{\mathbb{R}^3} \Big( f(x,u_{n}) u_{n} -4F(x, u_{n})\Big) dx\\
    &=\frac{1}{4} \| u_{n}\|_{E}^{2} + \frac{1}{4} \int_{\mathbb{R}^3} \mathcal{F}(x, u_{n}) \,dx \\
    &\geq \frac{1}{4 \theta} \| \eta_{n}u_{n}\|_{E}^{2} + \frac{1}{4 \theta} \int_{\mathbb{R}^3} \mathcal{F}(x, \eta_{n}u_{n}) \,dx \\
    & = \frac{1}{\theta}\Big[\frac{1}{4} \| \eta_{n}u_{n}\|_{E}^{2} +
    \frac{1}{4} \int_{\mathbb{R}^3} \Big( f(x,\eta_{n}u_{n}) \eta_{n}u_{n} -4F(x, \eta_{n}u_{n})\Big) dx\Big]\\
    & = \frac{1}{\theta}\Big(I(\eta_{n}u_n)-\frac{1}{4}\langle I'(\eta_{n}u_n),\eta_{n}u_{n}\rangle\Big)\\
    & \to + \infty\,\,\,\text{as}\,\,n\to \infty.
 \endaligned \end{equation*}
This contradicts \eqref{eq2-13}. In any case, we deduce that the $(C)_c$ sequence $\{u_{n}\}$ is bounded in $E$.
This completes the proof.
\end{proof}

\begin{proof}[Proof of Theorem \ref{th1-1}]
For the Hilbert space $E$, we choose an orthogonal basis $\{e_{i}\}$ of $E$,
let $X_{i}=\text{span}\{e_{i}\}$, $i=1,2,\cdot\cdot\cdot$,
and define

$$
Y_{k}=\bigoplus\limits_{i=1}^{k}X_{i},\quad \quad Z_{k}=\overline{\bigoplus_{i=k+1}^{\infty}X_{i}}.
$$
Then $E=Y_{k}\bigoplus Z_{k}$. According to Lemma \ref{lem3-1} and the oddness of $f$, we know that
$I$ satisfies the $(C)_{c}$ condition for any $c\in \mathbb{R}$ and $I(-u)=I(u)$.
It remains to verify the conditions (i) and (ii) of Fountain Theorem \ref{ft}.

Verification of (i).  Since on the finite dimensional space $Y_{k}$
all norms are equivalent, there exists $C_{k}>0$ such that
\begin{equation}\label{eq2-21}
C_{k}\|u\|_{L^{p}}\geq \|u\|_{E},\quad \forall u\in Y_{k}.
\end{equation}
From ($f_2$) we deduce that, there exist $L>0$ and $M_{k}>0$, such that for all
 $x \in \mathbb{R}^3$, $|u| \geq L$, we have
 $\frac{F(x,u)}{|u|^{4}}>M_{k}$, that is,
\begin{equation}\label{eq2-22}
  F(x,u) \geq M_{k} |u|^{4},\quad \forall x \in \mathbb{R}^3,\,\,\forall |u| \geq L.
\end{equation}

By $(f_1)$, one has
$$
|F(x,u)|\leq \Big(\frac{a_{1}}{2}+\frac{a_{1}}{p}L^{p-2}\Big)|u|^{2},\quad \forall x\in \mathbb{R}^{3},\,\,\,\forall |u|\leq L,
$$
which and \eqref{eq2-22} implies that
\begin{equation}\label{eq2-23}
F(x,u)\geq M_{k}|u|^{4}-c_{7}|u|^{2},\quad \forall (x,u)\in \mathbb{R}^{3}\times\mathbb{R},
\end{equation}
where $0<c_{7}<\frac{a_{1}}{2}+\frac{a_{1}}{p}L^{p-2}$.
Combining \eqref{eq2-21}, \eqref{eq2-23} with \eqref{eq2-3*}, we obtain
\begin{equation}\label{eq2-24}
\aligned
I (u)
&=\frac{1}{2}\|u\|_{E}^{2}+\frac{1}{4}\int_{\mathbb{R}^{3}}\phi_{u}^{t}u^{2}dx-\int_{\mathbb{R}^{3}}F(x,u)dx\\
&\leq \frac{1}{2} \|u\|_E^2 + \frac{\widetilde{C}_{2}} {4} \|u\|_E^4
- M_{k} \int_{\mathbb{R}^{3}}|u|^{4}dx + c_{7}\int_{\mathbb{R}^{3}}|u|^{2}dx \\
&\leq \frac{1}{2} \|u\|_E^2 + \frac{1} {4}\Big(\widetilde{C}_{2}-4\frac{M_{k}}{C_{k}^{4}}\Big) \|u\|_E^4
+ c_{7} \|u\|_{E}^2,\\
\endaligned
\end{equation}
for all $u\in Y_{k}$. Let $M_{k}$ large enough such that
$\widetilde{C}_{2}-4\frac{M_{k}}{C^{4}_{k}}< 0$, and  choosing
$$
\rho_{k}\geq \max\{\Big(\frac{C_{k}^{4}(2+4c_{7})}{4M_{k}-\widetilde{C}_{2}C_{k}^{4}}     \Big)^{\frac{1}{2}}\,\,,\,\,1\},
$$
inequality \eqref{eq2-24} implies that
$$
  a_k:=\max _{u\in Y_k, \|u\|_E=\rho_k}I(u)<0
$$
for some $\rho_k >0$ large enough.

Verification of (ii).
  For any $2\leq p<2^*_{s}$, taking
\begin{equation}\label{eq2-25}
    \beta_k:=\sup_{u\in Z_k, \|u\|_E=1}\|u\|_{L^p},
\end{equation}
one has $\beta_{k}\to 0$ as $k\to \infty$(see {\cite[Lemma 2.5]{ct2009}}).
From $(f_1)$, for any $\varepsilon>0$ there exists $C_{\varepsilon}>0$ such that
$$
|F(x,u)|\leq \varepsilon |u|^{2}+C_{\varepsilon}|u|^{p}.
$$
Moreover, due to $\phi_{u}^{t}>0$ for all $u\in H^{s}(\mathbb{R}^{3})$, we have
\begin{equation*}
\aligned
  I(u)
  &=\frac{1}{2}\|u\|_{E}^{2}+\frac{1}{4}\int_{\mathbb{R}^{3}}\phi_{u}^{t}u^{2}dx-\int_{\mathbb{R}^{3}} F(x,u)dx\\
  &\geq \frac{1}{2}\|u\|_{E}^{2}-\int_{\mathbb{R}^{3}} |F(x,u)|dx\\
  &\geq  \frac{1}{2}\|u\|_E^2 -\varepsilon \int_{\mathbb{R}^{3}}|u|^{2}dx-C_{\varepsilon}\int_{\mathbb{R}^{3}}|u|^pdx\\
      &\geq  \Big( \frac{1}{2} - \frac{\varepsilon}{V_0} \Big)\|u\|_E^2 - C_{\varepsilon} {\beta_k}^p \|u\|_E^p,
\endaligned
\end{equation*}
where $V_0$ is a lower bound of $V(x)$ from $(V)$ and $\beta _k$ are
defined in \eqref{eq2-25}. Choosing $r_k:=(V_0 p\beta_k^p)^{\frac{1}{2-p}}$, we
obtain
\begin{equation*}
\aligned
    b_k &=  \inf _{u\in Z_k, \|u\|_E=r_k} I(u)\\
     &\geq  \inf _{u\in Z_k, \|u\|_E = r_k}
\Big[ \Big( \frac{1}{2} - \frac{\varepsilon}{V_1} \Big)
 \|u\|_E^2 - C_{\varepsilon} {\beta_k}^p \|u\|_E^p \Big]\\
     &\geq  \Big(\frac{1}{2}-\frac{\varepsilon}{V_1}- \frac{1}{p} \Big) (C_{\varepsilon} p \beta_k^p )^{\frac{2}{2-p}}.
  \endaligned
  \end{equation*}
Because $\beta_k\to 0$ as $k\to 0$ and $p>2$, we have
$$
  b_k\geq \Big(\frac{1}{2}-\frac{\varepsilon}{V_1} - \frac{1}{p} \Big)
(C_{\varepsilon} p \beta_k^p )^{\frac{2}{2-p}} \to +\infty
$$
for enough small $\varepsilon$. This proves (ii).
Now, by Theorem \ref{ft}, $I$  possesses a sequence of critical points $\{u_{k}\}\subset E$
such that $I(u_{k})\to +\infty$ as $k\to \infty$. This complete the proof of Theorem \ref{th1-1}.
\end{proof}

\begin{proof}[Proof of Theorem \ref{th1-2}]
By virtue of Lemma \ref{lem3-2} and assumption $(f_{5})$, we see that $I$ satisfies the $(C)_{c}$
condition and is even in $u$. Like in the proof of Theorem \ref{th1-1},
assumptions $(f_2)$ and $(f_4)$ indicate that $I$ satisfies the conditions (i) and (ii) of Theorem \ref{ft}.
Hence Theorem \ref{th1-2} holds.
\end{proof}

\end{document}